\title[Remarks on minimal sets]{Remarks on minimal sets and  conjectures of Cassels,
  Swinnerton-Dyer, and Margulis}
\author{Jinpeng An}
\address{LMAM, School of Mathematical Sciences and BICMR, Peking University, Beijing, 100871, China
{\tt anjinpeng@gmail.com}}
\author{Barak Weiss}
\address{Ben Gurion University, Be'er Sheva, Israel 84105
{\tt barakw@math.bgu.ac.il}}
\newif\ifdraft\drafttrue
\newcommand{\goth}[1]{{\mathfrak{#1}}}
\newcommand{\Q}{{\mathbb {Q}}}
\newcommand{\R}{{\mathbb{R}}}
\newcommand{\OO}{{\mathcal{O}}}
\newcommand{\Z}{{\mathbb{Z}}}
\newcommand{\C}{{\mathbb{C}}}
\newcommand{\N}{{\mathbb{N}}}
\newcommand{\cl}{\overline}
\newcommand{\Ad}{{\operatorname{Ad}}}
\newcommand{\GL}{\operatorname{GL}}
\newcommand{\SL}{\operatorname{SL}}
\newcommand{\PGL}{\operatorname{PGL}}
\newcommand{\Lie}{\operatorname{Lie}}
\newcommand{\diag}{{\rm diag}}
\newcommand{\Gal}{{\rm Gal}}
\newcommand {\ignore}[1]  {}
\newcommand{\df}{{\, \stackrel{\mathrm{def}}{=}\, }}
\newcommand{\til}{\widetilde}
\newcommand{\sm}{\smallsetminus}
\newcommand{\vre}{\varepsilon}
\newcommand{\Stab}{\mathrm{Stab}}
\newtheorem{thm}{Theorem}
\newtheorem{lem}[thm]{Lemma}
\newtheorem{prop}[thm]{Proposition}
\newtheorem{cor}[thm]{Corollary}
\newtheorem{conj}[thm]{Conjecture}
\theoremstyle{definition}
\begin{document}

\maketitle

\begin{abstract}
We prove that a hypothesis of Cassels, Swinnerton-Dyer, recast by
Margulis as statement on the action of the
diagonal group $A$ on the space of unimodular lattices, is equivalent to several
assertions about minimal sets for this action. More generally, for a
maximal $\R$-diagonalizable subgroup $A$ of a
reductive group $G$ and a lattice $\Gamma$ in $G$, we give a sufficient condition for a
compact $A$-minimal subset $Y$ of $G/\Gamma$ to be of a simple form, which is also necessary if $G$ is $\R$-split.
We also show that the stabilizer of $Y$ has no nontrivial connected unipotent subgroups.
\end{abstract}

\section{Introduction}

Given a group $A$ acting on a locally compact space $X$, we say that
the action is {\em minimal} if every $A$-orbit is dense, or
equivalently, if there are no proper $A$-invariant closed subsets. We
say that a subset $Y \subset X$ is a {\em minimal set} if it is
$A$-invariant and closed, and the restriction of the $A$-action to
$Y$ is minimal. If
$X$ is compact then any closed invariant subset contains a minimal
set, and the study of minimal sets is often an important first step in
the study of all $A$-orbits in $X$. The study of minimal sets may
be viewed as the topological counterpart of the study of
invariant ergodic measures. In many dynamical systems,
there are many minimal sets defying a simple classification; however,
as we will see,
in some dynamical systems of algebraic origin, the classification of
minimal sets might be possible and is of great interest.

This paper is devoted to the study of minimal sets for the action of
$A$ on $G/\Gamma$, where $G$ is a connected reductive real
algebraic group, $\Gamma$ is a lattice (i.e. a discrete subgroup such
that the quotient $G/\Gamma$ supports a finite $G$-invariant measure),
and $A$ is a maximal connected $\R$-diagonalizable subgroup of $G$. Recall that
a {\em root} on $A$ is a nontrivial homomorphism $\alpha: A \to \R^*$
such that there is a nonzero  $\mathbf{v}\in \goth{g} = \Lie(G) $ such that for any $a \in
A$, $\Ad(a)\mathbf{v} = \alpha(a)\mathbf{v}$, where $\R^*$ denotes the multiplicative group of positive real
numbers. We will denote the kernel of $\alpha$ by
$A_\alpha$; this is a subgroup of $A$ which is of codimension one. We will pay particular
attention to the action of the groups $A_\alpha$ on a minimal set
for $A$.

We begin with the important special case in which
$G = \SL_n(\R), \, n \geq 3$,  $\Gamma = \SL_n(\Z)$, and $A$ is the subgroup of all
diagonal matrices with positive diagonal entries. In this case the space $G/\Gamma$ is
naturally identified with the space of unimodular lattices in $\R^n$,
where the action is simply the linear action of a matrix as a linear
isomorphism of $\R^n$. The dynamics of the
$A$-action on $G/\Gamma$ has been intensively studied, both for its
intrinsic interest as a prototypical action for which one may hope to
classify all minimal sets and invariant measures,
and for its connections to number-theoretic questions.
See \cite{EL}
for a survey of the history as well as many recent developments.

To illustrate this connection, let $f$ be the product of three real
linear forms in three variables. In \cite[Hypothesis A]{CaSD}, Cassels and
Swinnerton-Dyer asked whether
$
\inf_{\vec{x}\in \Z^3 \sm \{0\}} |f(\vec{x})|>0 $ implies that $f$ is
a multiple of a form with integer coefficients\footnote{Actually the
  statement above is the negation of what Cassels and
  Swinnerton-Dyer called `Hypothesis A'.}.
They showed that an affirmative answer to this question would yield an
affirmative solution to a famous conjecture of Littlewood, and
formulated several additional questions about products of three linear
real forms, which remain open to this day.
Cassels and Swinnerton-Dyer did not make an explicit
conjecture regarding the correct answer to their questions, but did
express the opinion that `we  tend
to believe [that Hypothesis A is true]'. In 2000 Margulis
\cite{Margulis_conjecture} showed the equivalence of Hypothesis A with
a dynamical statement, which he stated explicitly as a conjecture:

\begin{conj}[Margulis] \label{conj: margulis}
Let $n \geq 3, \, G = \SL_n(\R), \, \Gamma = \SL_n(\Z), $ and let $A$
be the group of positive diagonal matrices.
Any $A$-orbit on $G/\Gamma$ which is bounded (i.e. has compact
closure) is closed (and hence compact).
\end{conj}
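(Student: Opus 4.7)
The plan is to reduce Margulis's conjecture to the classification of compact $A$-minimal subsets of $G/\Gamma$, which is the programme of this paper.

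I would begin by taking $x_0 \in G/\Gamma$ with $Ax_0$ bounded, so that $K = \overline{Ax_0}$ is compact. Zorn's lemma supplies a nonempty compact $A$-minimal subset $Y \subset K$. Since $G = \SL_n(\R)$ is $\R$-split, the necessary-and-sufficient description of such minimal sets advertised in the abstract should apply, and the task reduces to verifying its hypothesis for this particular $Y$. The desired conclusion of that step is that $Y$ is a single closed $A$-orbit $Ay$ with $A \cap \Stab_G(y)$ cocompact in $A$.

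Granted $Y = Ay$ is closed, I would next show $Ax_0 = Y$. Choose $a_k \in A$ with $a_k x_0 \to y$; a transversal analysis in the root-space decomposition at $y$, combined with the assertion (also promised in the abstract) that the stabilizer of $Y$ contains no nontrivial connected unipotent subgroup, should rule out any nonzero transverse accumulation and force $x_0 \in Y$, whence $Ax_0 = Y$ is closed.

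The main obstacle is plainly the first step: verifying the sufficient condition of the structure theorem for a \emph{general} compact $A$-minimal set contained in a bounded region, using nothing beyond its boundedness. In rank one the analogous statement is false — bounded geodesic orbits are routinely non-closed — so the argument must exploit higher rank essentially, presumably through recurrence of the codimension-one kernels $A_\alpha$ acting on $Y$, which would produce extra invariance under a root group $\exp(\goth{g}_\alpha)$ and then force $Y$ into the required simple form. Finding a working topological version of such higher-rank recurrence (the analogue of what is known for invariant measures) is the essential open difficulty and is presumably why Margulis's conjecture remains unresolved; I would therefore expect the bulk of any real attempt to be devoted to this one point, while the outer reductions above proceed essentially as described.
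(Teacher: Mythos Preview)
The statement you are trying to prove is not a theorem of the paper: it is stated there as Conjecture~\ref{conj: margulis} and is \emph{not} proved. What the paper establishes is Theorem~\ref{thm: equivalence of conjectures}, namely that the conjecture is equivalent to each of the assertions (a)--(d) about compact $A$-minimal sets. So there is no ``paper's own proof'' to compare against, and your proposal---as you yourself concede in the final paragraph---does not furnish one either.

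That said, your outline is essentially a rediscovery of parts of the paper's reduction. Your first reduction (pass to a compact $A$-minimal $Y\subset\overline{Ax_0}$ and try to show $Y$ is a single compact $A$-orbit) is exactly the implication Conjecture~$\Longleftrightarrow$~(a) in Theorem~\ref{thm: equivalence of conjectures}. Your plan to invoke the structure theorem (Corollary~\ref{cor}) to conclude $Y$ is a single orbit is circular in the precise sense the paper makes explicit: the hypothesis of Corollary~\ref{cor} is condition (d), and Theorem~\ref{thm: equivalence of conjectures} shows (d) is itself equivalent to the conjecture. You have correctly located the open point.

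One genuine gap in your sketch, independent of the above: your second step, deducing $x_0\in Y$ from ``$Y=Ay$ is a compact orbit inside $\overline{Ax_0}$'' via a transversal analysis and Theorem~\ref{T:main1}, is not how the paper (or the literature) closes this. Theorem~\ref{T:main1} controls $\Stab_G(Y)$, not returns of nearby orbits to $Y$; a naive transversal argument does not force the displacement into a unipotent direction stabilizing $Y$. The paper instead cites the Cassels--Swinnerton-Dyer argument (see \cite{CaSD, LW}): if $\overline{Ax_0}$ properly contains a compact $A$-orbit, then it contains an $AU$-orbit for some one-parameter root group $U$, and Proposition~\ref{prop: CSD unbounded} shows any $AU$-orbit is unbounded---a contradiction. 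If you want a self-contained outer reduction, that is the argument to supply.
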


In \S \ref{section: conjectures} we will show that Conjecture
\ref{conj: margulis} is equivalent to statements regarding minimal
sets for the $A$-action. Namely we will show:

\begin{thm}\label{thm: equivalence of conjectures} Let $G, \Gamma, A$
  be as in Conjecture \ref{conj: margulis}.
The following assertions are equivalent to Conjecture \ref{conj: margulis}:
\begin{itemize}
\item[(a)]
Any compact minimal set for the $A$-action on $G/\Gamma$ is a compact
orbit.
\item[(b)]
For any root $\alpha$ and any compact $A$-minimal set $Y\subset
G/\Gamma$, $Y$ is also minimal for the action of $A_\alpha$.
\item[(c)]
For any root $\alpha$ and any compact minimal set $Y \subset G/\Gamma$ for the action of
$A$, any $A_{\alpha}$-minimal subset $Y_\alpha \subset Y$ satisfies
$AY_{\alpha} =Y$.
\item[(d)]
For any root $\alpha$ and any compact minimal set $Y \subset G/\Gamma$ for the action of
$A$, there exists an $A_{\alpha}$-minimal subset $Y_\alpha \subset Y$
such that $AY_{\alpha} =Y$.
\end{itemize}
\end{thm}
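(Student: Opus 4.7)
I shall prove the equivalences via the cycle $\mathrm{Conj.}\Rightarrow(\mathrm{a})\Rightarrow(\mathrm{c})\Rightarrow(\mathrm{d})\Rightarrow\mathrm{Conj.}$ together with the side implications $(\mathrm{b})\Rightarrow(\mathrm{c})$ and $\mathrm{Conj.}\Rightarrow(\mathrm{b})$. Four of these implications are one-line arguments: for $\mathrm{Conj.}\Rightarrow(\mathrm{a})$, any $A$-orbit $Ax$ inside a compact $A$-minimal set $Y$ is bounded, hence closed by the Conjecture, and $\overline{Ax}=Y$ by $A$-minimality, giving $Y=Ax$; for $(\mathrm{a})\Rightarrow(\mathrm{c})$, on a compact $A$-orbit $Y=Ax_0$, any nonempty $Y_\alpha\subset Y$ satisfies $AY_\alpha=Y$ because for $y\in Y_\alpha$ one has $Ay=Y$; the implication $(\mathrm{c})\Rightarrow(\mathrm{d})$ is trivial; and $(\mathrm{b})\Rightarrow(\mathrm{c})$ is immediate since $A_\alpha$-minimality of $Y$ forces the unique $A_\alpha$-minimal subset of $Y$ to be $Y$ itself.

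The two substantial directions are $(\mathrm{d})\Rightarrow\mathrm{Conj.}$ and $\mathrm{Conj.}\Rightarrow(\mathrm{b})$, both of which I plan to deduce using the structural results established earlier in the paper---namely, the homogeneous form of compact $A$-minimal sets $Y=Hy$ (with $H\supset A$ a closed reductive subgroup and $\Stab_H(y)$ a uniform lattice), valid in the $\R$-split setting, together with the fact that $\Stab(Y)$ has no nontrivial connected unipotent subgroup. For $(\mathrm{d})\Rightarrow\mathrm{Conj.}$, assume $Ax$ is bounded but not closed, and pick a minimal $Y\subset\overline{Ax}$. Writing $Y=Hy$, hypothesis (d) applied for each root $\alpha$ translates into a density statement for $A\cdot\Stab_H(y)$ inside $H$; combined with the no-unipotent-stabilizer result, this is expected to rule out $H\supsetneq A$, so that $Y=Ay$ is a compact $A$-orbit. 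A connectedness argument on $\overline{Ax}$ then forces $\overline{Ax}=Y$, hence $Ax=Y$, contradicting that $Ax$ is not closed. For $\mathrm{Conj.}\Rightarrow(\mathrm{b})$, the Conjecture and (a) give $Y=Ax$ as a compact $A$-orbit, and $A_\alpha$-minimality amounts to the density of $A_\alpha\cdot\Stab_A(x)$ in $A$; under the known parametrization of such compact orbits by orders in totally real degree-$n$ number fields, this is a density assertion about the log-unit lattice, which for $n\geq 3$ follows either from standard number theory or directly from the structural results.

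The main obstacle will be the step $H=A$ inside $(\mathrm{d})\Rightarrow\mathrm{Conj.}$: the hypothesis (d) is weak, asserting only the existence of \emph{one} $A_\alpha$-minimal subset with the required $A$-saturation per root. Rigidifying this into the collapse of the homogeneous structure group to $A$ itself will require a careful joint use of the simple-form theorem and the no-connected-unipotent-subgroup constraint on $\Stab(Y)$.
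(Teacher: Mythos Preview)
Your proposal diverges from the paper's proof in a useful way for the implication $(\mathrm{d})\Rightarrow(\mathrm{a})$: the paper argues this directly via the orbit-contraction technique of Lindenstrauss--Weiss (producing a $U_\alpha$-displacement inside $Y$, then using the cocompactness Lemma~\ref{L:cocompact} to absorb $U_\alpha$ into the stabilizer and contradict Proposition~\ref{prop: CSD unbounded}), whereas you plan to invoke the structural Theorem~\ref{T:main2} and Corollary~\ref{cor}. That alternative is legitimate---indeed the paper remarks that Corollary~\ref{cor} yields the equivalence of (a), (c), (d)---and buys you a cleaner argument at the cost of importing heavier machinery.

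However, there is a genuine gap in your step from $(\mathrm{a})$ back to Conjecture~\ref{conj: margulis}. You write that once the minimal set $Y\subset\overline{Ax}$ is shown to be a compact $A$-orbit, ``a connectedness argument on $\overline{Ax}$ then forces $\overline{Ax}=Y$.'' No such connectedness argument exists: $\overline{Ax}$ is connected and $Y$ is a closed connected subset, but nothing topological prevents $\overline{Ax}$ from being strictly larger. This is precisely the nontrivial content of the Cassels--Swinnerton-Dyer mechanism: if $\overline{Ax}$ properly contains the compact orbit $Y$, one shows (as in \cite{CaSD}, \cite{Margulis_oppenheim}, \cite{LW}) that $\overline{Ax}$ must contain a full $AU$-orbit for some one-parameter unipotent $U$ normalized by $A$, and then Proposition~\ref{prop: CSD unbounded} (Mahler's criterion) forces $\overline{Ax}$ to be unbounded, contradicting the hypothesis. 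You cannot bypass this step; it is where the specific arithmetic of $\SL_n(\Z)$ enters.

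A secondary concern: your plan for $\mathrm{Conj.}\Rightarrow(\mathrm{b})$ via density of $A_\alpha\cdot\Stab_A(x)$ in $A$ is correct in spirit, but you should be aware that it reduces to showing that no root $\alpha$ vanishes identically on the log-unit lattice $\Stab_A(x)$---a statement that does require an argument (the paper simply cites \cite[Step 6.1]{LW}). Your sketch ``follows from standard number theory or directly from the structural results'' is not yet a proof.
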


Now we turn to the general case. The assertion of Conjecture
\ref{conj: margulis} is not true for general $G/\Gamma$. In fact, in
unpublished work, Mary Rees showed that it does not hold even for
$G=\SL_3(\R)$ and $\Gamma$ a certain cocompact lattice. An interesting
feature of Rees' examples is that the groups $A_{\alpha}$ do not act
minimally on $A$-minimal sets, see the discussion in \cite[\S5]{LW}. That
is, Rees' example also shows that condition (b) of Theorem \ref{thm: equivalence
  of conjectures} need not hold in general. In this paper we clarify
the implications among the conditions (a--d), in the context of a
general $G/\Gamma$; among other things, it will develop that it is no coincidence, that
for certain $G/\Gamma$, the same constructions show the failure of
Conjecture \ref{conj: margulis} and condition (b). More precisely, we will prove the following result.

\begin{thm}\label{T:main2}
Let $G$ be a connected reductive real algebraic group, $\Gamma$ a lattice in $G$, $A$ a maximal connected $\R$-diagonalizable subgroup of $G$, and $Y \subset G/\Gamma$ a compact $A$-minimal set. Suppose that for any root $\alpha$, there exists an $A_\alpha$-minimal
set $Y_\alpha\subset Y$ such that $AY_\alpha=Y$. Then there exists a compact subtorus $T$ of $G$ centralized by $A$ such that $Y$ is an $AT$-orbit.
\end{thm}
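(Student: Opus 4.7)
The plan is to study the identity component
\[
H \df \{g\in G : gY = Y\}^0
\]
of the setwise stabilizer of $Y$, and to establish two things: (i) $H = AT$ for some compact subtorus $T\subset Z_G(A)$, and (ii) $H$ acts transitively on $Y$, so that $Y=Hy_0$ for any $y_0\in Y$. Since $A\subset H$ is automatic, the content of both points is a quantitative comparison of $H$ with $A$.

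For the lower bound on $H$, I would use the hypothesis one root at a time. Fix a root $\alpha$ and the given $A_\alpha$-minimal subset $Y_\alpha\subset Y$ with $AY_\alpha=Y$, and let $B_\alpha \df \{a\in A : aY_\alpha=Y_\alpha\}$. Since distinct $A_\alpha$-minimal subsets of $Y$ are disjoint, $Y$ is canonically identified with the fibre bundle $A\times_{B_\alpha} Y_\alpha$; compactness of $Y$ then forces $A/B_\alpha$ to be compact, so $B_\alpha$ is a cocompact closed subgroup of $A$ containing the codimension-one subgroup $A_\alpha$. Hence $Y$ fibres over a circle $A/B_\alpha$ (or over a point) with fibre $Y_\alpha$. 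Running this analysis for all roots simultaneously, and combining the various fibration structures, I would argue that $Y$ is a homogeneous space for some closed subgroup $L\subset G$ containing $A$ whose identity component is the sought-after $AT$.

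For the upper bound on $H$, I would invoke the parallel result stated in the abstract: that the stabilizer of a compact $A$-minimal set contains no nontrivial connected unipotent subgroup. Combined with reductivity of $G$ and maximality of $A$ as an $\R$-diagonalizable subgroup, this forces $H^0/A$ to lie in the anisotropic (hence compact) part of $Z_G(A)$, and thus in a compact subtorus $T \subset Z_G(A)$ that centralizes $A$. Together with transitivity, this yields $Y = (AT)y_0$, as claimed.

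The principal obstacle is the transitivity/fibration argument in the second paragraph: turning the abstract compact-fibration data, one circle $A/B_\alpha$ per root, into honest group elements of $G\setminus A$ that stabilize $Y$. This step must use the hypothesis for \emph{all} roots simultaneously in an essential way, since Rees' examples show that failure at a single root can occur without forcing the conclusion to fail entirely; and it likely proceeds by an inductive reduction on the root system, or by exhibiting, via recurrence or averaging along the $B_\alpha$-periods, a single element of $Z_G(A)\setminus A$ in $H$ for each independent fibration direction.
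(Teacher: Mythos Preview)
Your upper-bound step is correct and is exactly Theorem~\ref{T:main1}: $\Stab_G(Y)^0\subset Z_G(A)$. The gap is precisely where you locate it, in the transitivity argument, and the paper fills it by a route different from the one you sketch.

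You propose to manufacture elements of $\Stab_G(Y)\setminus A$ out of the fibration data $Y\cong A\times_{B_\alpha}Y_\alpha$. But that identification is purely topological; the circle $A/B_\alpha$ does not come with any action on $Y$ by elements of $G$ outside $A$, and there is no evident mechanism for promoting a fibration direction to a group element. The paper sidesteps this entirely. Instead of the stabilizer it analyses the \emph{return set}
\[
R \df \{g\in G: gY\cap Y\neq\emptyset\},
\]
which is closed and $\Ad(A)$-invariant but not a group. A linearization lemma (Lemma~\ref{L:bi-invariant}) shows that if $R\cap U_{[\alpha]}=\{e\}$ for every root $\alpha$, then a neighbourhood of $e$ in $R$ already lies in $Z_G(A)$. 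This gives $Y\cap\Omega y\subset Z_G(A)^0 y$ for a neighbourhood $\Omega$ of $e$, hence $Y\subset Z_G(A)^0 y$ globally by $A$-minimality. Once $Y$ lies in a single compact orbit of the abelian group $Z=Z_G(A)^0=A\times M$ (with $M$ compact), the conclusion is elementary: writing $Zy\cong Z/\Lambda$ and $p:Z\to M$ for the projection, the torus $T=(\overline{p(\Lambda)})^0$ satisfies $AT=(\overline{A\Lambda})^0$, so $ATy=\overline{Ay}=Y$.

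The hypothesis on the $Y_\alpha$ enters only to verify $R\cap U_{[\alpha]}=\{e\}$. If some $u\in U_{[\alpha]}\setminus\{e\}$ had $uY\cap Y\neq\emptyset$, then since $u$ centralizes $A_\alpha$, the intersection $uY\cap Y$ contains an $A_\alpha$-minimal set $Y'$. Your cocompactness observation (Lemma~\ref{L:cocompact}) is applied to $Y'$: one finds $a_n\in\Stab_A(Y')$ with $a_nua_n^{-1}\to e$, and a commutator computation then forces $u\in\Stab_G(Y')$, whence a nontrivial connected unipotent subgroup lands in $\Stab_G(Y)$, contradicting Theorem~\ref{T:main1}. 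So Theorem~\ref{T:main1} is indeed the key input, but it is used as an obstruction inside the return-set argument, not as an upper bound on $H$ applied after transitivity has already been secured.
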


Note that if $G$ is $\R$-split, then $A$ is of finite index in $Z_G(A)$, and there is no nontrivial compact torus centralized by $A$. So we have the following corollary, which implies the equivalence of (a), (c) and (d) in Theorem \ref{thm: equivalence of conjectures}.

\begin{cor}\label{cor}
Under the conditions of Theorem \ref{T:main2}, suppose in addition that $G$ is $\R$-split, then the
following are equivalent:
\begin{itemize}
\item[(a)]
$Y$ is a single (compact) $A$-orbit.
\item[(b)]
For any root $\alpha$, any $A_\alpha$-minimal
set $Y_\alpha\subset Y$ satisfies $AY_\alpha=Y$.
\item[(c)]
For any root $\alpha$, there exists an $A_\alpha$-minimal
set $Y_\alpha\subset Y$ such that $AY_\alpha=Y$.
\end{itemize}
\end{cor}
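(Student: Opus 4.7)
The plan is to establish the cyclic chain of implications $(a)\Rightarrow(b)\Rightarrow(c)\Rightarrow(a)$. The first two implications are essentially formal, while the third is the substantive step and is what Theorem~\ref{T:main2} is designed for.

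For $(a)\Rightarrow(b)$: if $Y=Ax_0$ is a single compact $A$-orbit, then for any root $\alpha$ and any $A_\alpha$-minimal set $Y_\alpha\subset Y$, choosing any $y\in Y_\alpha$ gives $AY_\alpha\supseteq Ay=Y$, and the reverse inclusion is automatic. For $(b)\Rightarrow(c)$: since $Y$ is compact and $A_\alpha$-invariant (as $A_\alpha\subset A$), a standard Zorn's lemma argument produces at least one $A_\alpha$-minimal subset of $Y$, to which (b) applies.

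The key step is $(c)\Rightarrow(a)$. The hypothesis of (c) is precisely the hypothesis of Theorem~\ref{T:main2}, so the theorem immediately produces a compact subtorus $T\subset G$, centralized by $A$, and a point $x\in G/\Gamma$ with $Y=ATx$. It then suffices to show $T=\{e\}$ under the $\R$-split assumption. I would argue as follows: when $G$ is $\R$-split, $A$ has finite index in $Z_G(A)$, so $A$ must coincide with the identity component of $Z_G(A)$. A connected compact subgroup $T$ centralized by $A$ lies in this identity component, hence in $A$. But $A$ is a connected $\R$-diagonalizable group, isomorphic as a Lie group to $(\R_{>0})^k$, which contains no nontrivial compact subgroup. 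Therefore $T=\{e\}$ and $Y=Ax$ is a compact $A$-orbit, establishing~(a).

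I do not anticipate any significant obstacle: the entirety of the nontrivial work is packaged inside Theorem~\ref{T:main2}, and the deduction of the corollary amounts to the two structural facts about $\R$-split reductive groups already recorded in the paragraph preceding the corollary's statement. The only point where one should be mildly careful is ensuring that the compact torus $T$ supplied by Theorem~\ref{T:main2} is connected so that the identity-component argument applies, but this is part of the meaning of ``subtorus'' in the theorem's conclusion.
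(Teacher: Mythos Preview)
Your proposal is correct and follows essentially the same approach as the paper, which simply remarks (in the paragraph preceding the corollary) that when $G$ is $\R$-split there is no nontrivial compact torus centralized by $A$, so Theorem~\ref{T:main2} immediately gives $(c)\Rightarrow(a)$. Your write-up is in fact more detailed than the paper's, spelling out the easy implications $(a)\Rightarrow(b)\Rightarrow(c)$ and the identity-component argument for $T=\{e\}$ that the paper leaves implicit.
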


For a Lie group $H$, we denote its connected component of the identity by $H^0$. For any compact $A$-minimal set $Y \subset G/\Gamma$, denote
$$
\Stab_G(Y) \df \{g \in G: gY=Y\}.
$$
The proof of Theorem \ref{T:main2} is based on the following result, which is of interest in itself.

\begin{thm}\label{T:main1}
Let $G, \Gamma, A$ be as in Theorem \ref{T:main2}, and let $Y\subset G/\Gamma$ be a compact $A$-minimal set.
Then $\Stab_G(Y)$ has no nontrivial connected unipotent subgroups. Equivalently, we have
$\Stab_G(Y)^0\subset Z_G(A)$.
\end{thm}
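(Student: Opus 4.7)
The plan is to prove the theorem by contradiction: assume $\Stab_G(Y)^0$ contains a nontrivial connected unipotent subgroup and derive a violation of the discreteness of $\Gamma$. Set $H = \Stab_G(Y)^0$ and $\goth{h} = \Lie(H)$. Since $A$ is connected and preserves $Y$, $A \subset H$, so $A$ normalizes $\goth{h}$ and gives a weight decomposition $\goth{h} = \goth{h}_0 \oplus \bigoplus_\alpha \goth{h}_\alpha$. Because $Z_G(A)$ is reductive with compact anisotropic factor, $\Lie(Z_G(A)) = \goth{a} \oplus \goth{m}$ consists entirely of semisimple elements, so $\goth{h}_0 \subset \Lie(Z_G(A))$ contains no nonzero nilpotent element. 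In one direction, a nonzero $X \in \goth{h}_\alpha$ for nontrivial $\alpha$ is a root vector, hence nilpotent, so $\exp(tX)$ is a nontrivial connected unipotent subgroup of $H$; in the other, if each $\goth{h}_\alpha$ ($\alpha$ nontrivial) vanishes then $\goth{h} = \goth{h}_0 \subset \Lie(Z_G(A))$, i.e., $H \subset Z_G(A)$. This gives the ``equivalently'' in the statement and reduces the theorem to showing that no such $\goth{h}_\alpha$ can be nonzero. Fix $0 \neq X \in \goth{h}_\alpha$ and let $u_t = \exp(tX)$; then $U := \{u_t : t \in \R\} \subset H$ and $a u_t a^{-1} = u_{\alpha(a) t}$ for all $a \in A$.

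I first rule out $U$-fixed points in $Y$: if $u_s y_0 = y_0$ for all $s$ and some $y_0$, then $u_s(ay_0) = a u_{s/\alpha(a)} y_0 = ay_0$, so by $A$-minimality and continuity $U$ fixes every point of $Y$; but then $U$ is a nontrivial connected subgroup of the discrete group $\Stab_G(y_0)$, impossible. Next, define $\tau: Y \to (0, \infty]$ by $\tau(y) = \inf\{t > 0 : u_t y = y\}$ (with $\tau(y) = \infty$ when the $U$-orbit is not closed). The scaling relation $Uay = aUy$ yields $\tau(ay) = \alpha(a) \tau(y)$, so $\{y \in Y : \tau(y) < \infty\}$ is $A$-invariant and, by $A$-minimality, either equals $Y$ or is empty.

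In the first subcase $\tau$ is finite on $Y$. Since $\alpha(A) = \R_{>0}$ I choose $a_n \in A$ with $\alpha(a_n) \to 0$; then $y_n := a_n y_0$ has $U$-period $\tau_n := \alpha(a_n) \tau(y_0) \to 0$. Using compactness of $Y$ I choose lifts $y_n = g_n \Gamma$ with $g_n$ in a compact $K \subset G$. The elements $\gamma_n := g_n^{-1} u_{\tau_n} g_n$ lie in $\Gamma$ and tend to $e$ (since $u_{\tau_n} \to e$ and $g_n$ is bounded), so discreteness of $\Gamma$ forces $\gamma_n = e$ for large $n$, whence $u_{\tau_n} = e$ and $\tau_n = 0$, contradicting $\tau_n > 0$. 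This subcase is clean.

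The main obstacle is the other subcase $\tau \equiv \infty$, where no $U$-orbit in $Y$ is closed. Here I would invoke Ratner's orbit-closure theorem to write $\overline{U y_0} = L y_0$ for a closed $L \supsetneq U$ with $L y_0$ of finite $L$-invariant volume, and exploit the $A$-equivariance $L_{ay} = a L_y a^{-1}$ together with $A$-minimality and the countability of conjugacy classes of Ratner subgroups to pin down the conjugacy class of $L$. The goal is then to extract from $L$ or from its $A$-translates $a L a^{-1}$ a further one-parameter root subgroup $V \subset \Stab_G(Y)$ whose orbits on $Y$ are periodic somewhere, reducing to the previous subcase. The delicate point is that a priori $L$ stabilizes only the homogeneous piece $L y_0 \subsetneq Y$ rather than all of $Y$, so transferring new unipotents into the full stabilizer $\Stab_G(Y)$ requires combining the scaling of $A$ on $\goth{g}_\alpha$ with the rigidity of homogeneous subsets inside an $A$-minimal compact set.
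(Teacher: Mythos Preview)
Your treatment of the ``equivalently'' clause is correct, and your first subcase (some $U$-orbit in $Y$ is periodic) is a clean and valid argument: scaling the period to zero along $A$ and using compactness of $Y$ together with discreteness of $\Gamma$ gives the contradiction. One small point: the set $\{y:\tau(y)<\infty\}$ is $A$-invariant but you have not shown it is closed, so $A$-minimality does not directly give the dichotomy ``all of $Y$ or empty''. This does not matter, since your argument in the first subcase only uses a single point $y_0$ with $\tau(y_0)<\infty$; the correct dichotomy is simply ``some point has finite $\tau$'' versus ``$\tau\equiv\infty$''.

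The genuine gap is the second subcase, which you yourself flag as incomplete. Your proposed strategy there does not close. Ratner gives $\overline{Uy_0}=Ly_0$ with $L\supsetneq U$, but $L$ only stabilizes $Ly_0$, not $Y$, and you offer no mechanism to push any unipotent from $L$ into $\Stab_G(Y)$. Moreover, your stated plan is to produce a root subgroup $V\subset\Stab_G(Y)$ with a \emph{periodic} orbit somewhere in $Y$; but your own first-subcase argument shows that no unipotent one-parameter subgroup of $\Stab_G(Y)$ can have a periodic orbit in $Y$. So the reduction you envision would, if it ever succeeded, immediately terminate in a contradiction --- which is fine --- but you give no reason to believe such a $V$ exists. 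The $A$-equivariance $L_{ay}=aL_ya^{-1}$ and countability of Ratner classes are real ingredients, but by themselves they do not force periodicity of any unipotent orbit, nor do they transport unipotents from $L$ into $\Stab_G(Y)$.

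The paper's argument is structurally different and does not pass through periodicity at all. The main input is a proposition (using Ratner together with a result of Dani--Margulis on discreteness of $\Gamma$-orbits in representation spaces) asserting that for $U$ normalized by $A$ there is a \emph{reductive} $A$-normalized subgroup $\til H\supset U$ with $\overline{Uy}=\til Hy$ for residually many $y\in Y$, and that $Y$ lies in a single closed finite-volume orbit of $L=N_G(\til H)^0$, which is again reductive and contains $A$. The theorem is then proved by induction on $\dim G$: if $L\ne G$ one applies the induction hypothesis to the $A$-minimal set $Y$ inside $L/(L\cap g\Gamma g^{-1})$; if $L=G$ but $\til H\ne G$ then $\til H$ is a proper normal subgroup with a finite-volume closed orbit, contradicting irreducibility of $\Gamma$ (the reducible and non-semisimple cases having been handled separately); and if $\til H=G$ then $Y=G/\Gamma$, contradicting the existence of compact Cartan orbits established by Prasad--Raghunathan. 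The key idea you are missing is precisely this: rather than trying to create periodic unipotent orbits, one shows the Ratner hull of $U$ is reductive and $A$-normalized, and then uses induction on the ambient group.
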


If $G$ is $\R$-split, Theorem \ref{T:main1} asserts that $\Stab_G(Y)^0=A$, i.e., $A$ is of finite index in $\Stab_G(Y)$. However, in the general case, it may happen that $\Stab_G(Y)$ contains a full Cartan subgroup of $G$ which is strictly larger than $A$. We will illustrate this with an example where $G$ is the underlying real algebraic group of $\SL_2(\C)$ at the end of Section 3.

The proof of Theorem \ref{T:main1} relies on work of Dani and Margulis
\cite{DM} and Prasad and Raghunathan \cite{PR}. It also relies on a
statement (Proposition \ref{prop: minimal normalized} below) about orbit-closures of elements of an
$A$-minimal set, under a unipotent group normalized by $A$. The proof
of Proposition \ref{prop: minimal normalized} employs some ideas of
Mozes \cite{Mozes epimorphic}. Throughout this paper, by a real algebraic group, we mean an open subgroup of the Lie group of real points of an algebraic group defined over $\R$.

\subsection{Acknowledgements}
J.A. acknowledges support of NSFC grant 10901005 and FANEDD grant
200915. B.W. acknowledges support of European Research Council grant
DLGAPS 279893, and  Israel Science Foundation grant 190/08. J.A. is grateful to the hospitality of Ben Gurion University where this
work was initiated, and to Jiu-Kang Yu for helpful discussion on Galois theory involved in the proof of Proposition \ref{example}. 
Some of our work on this paper were conducted in Astrakhan, Russia, during the conference {\em Diophantine analysis} in summer 2012, 
and we would like to thank the organizers of the
conference for a stimulating environment. We would also like to thank
Elon Lindenstrauss and Uri Shapira for useful comments.

\section{Conjectures for the $A$-action}\label{section: conjectures}

We begin with a simple
proof of Theorem \ref{thm: equivalence of conjectures}, based on the
results of \cite{LW}. We will need the following:

\begin{prop}\label{prop: CSD unbounded}
Let $G/\Gamma = \SL_n(\R)/\SL_n(\Z)$, let $A$ be the group of diagonal
positive matrices, and let $U$ be a one-parameter unipotent
subgroup of $G$ normalized by $A$. Then any $AU$-orbit in $G/\Gamma$ is
unbounded.
\end{prop}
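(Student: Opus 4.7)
The plan is to apply Mahler's compactness criterion, which identifies bounded subsets of $G/\Gamma = \SL_n(\R)/\SL_n(\Z)$ with families of unimodular lattices admitting a uniform positive lower bound on the norm of a shortest nonzero vector. Thus it suffices to show that for any $x = g\Gamma$, the family of lattices $\{aug\Z^n : a \in A, u \in U\}$ contains elements with arbitrarily short nonzero vectors.

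The first step is to put $U$ in a standard form. Write $U = \exp(\R X)$ for a nonzero nilpotent $X \in \goth{g}$; since $A$ normalizes $U$, the line $\R X$ is $\Ad(A)$-stable, so $X$ is an $\Ad(A)$-eigenvector with a nontrivial eigencharacter. Hence $X$ belongs to some root space $\goth{g}_\alpha$. In $\goth{sl}_n$ every root space is one-dimensional, spanned by an elementary matrix $E_{ij}$ with $i \neq j$, so after rescaling the parameter we may assume $U = \{u_t \df I + tE_{ij} : t \in \R\}$.

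Next, I would choose a nonzero $v = (v_1, \dots, v_n)^T \in g\Z^n$ with $v_j \neq 0$; such a vector exists because $\{w \in g\Z^n : w_j = 0\}$ has rank at most $n-1$ and so is a proper subgroup of $g\Z^n$. Set $t = -v_i/v_j$ and $w \df u_t v$; then $w_i = 0$. For $s > 0$, let $a(s) \in A$ be the diagonal matrix with $i$-th entry equal to $s$ and all other diagonal entries equal to $s^{-1/(n-1)}$, so that $\det a(s) = 1$. Then $(a(s)w)_i = s \cdot 0 = 0$, while for $l \neq i$ one has $|(a(s)w)_l| = s^{-1/(n-1)}|w_l|$, so $\|a(s)w\| \to 0$ as $s \to \infty$. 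Consequently the lattice $a(s) u_t g\Z^n$, which lies in $AU\cdot x$, contains nonzero vectors of arbitrarily small norm, and Mahler's criterion forces $AU\cdot x$ to be unbounded.

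The only substantive ingredient is the reduction of the one-parameter subgroup $U$ to a root-space subgroup via the normalization hypothesis; once this is in hand, the unboundedness is exhibited by an entirely explicit choice of $a$ and $u$, so I do not foresee any real obstacle.
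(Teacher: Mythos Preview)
Your proof is correct and follows essentially the same approach as the paper: reduce $U$ to a root subgroup $\{I+tE_{ij}\}$ via the normalization hypothesis, then use $u_t$ to kill the $i$-th coordinate of a lattice vector and a diagonal element to shrink the remaining coordinates, concluding by Mahler's criterion. Your version is slightly more streamlined in that you directly select a vector with $v_j\neq 0$, whereas the paper handles the case $v_j=0$ separately before applying $u$, but the content is the same.
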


\begin{proof}
This was proved in \cite{CaSD} and again in \cite{LW}; we repeat the proof for
completeness.

Recall Mahler's criterion, which asserts that $X \subset G/\Gamma$ is
bounded if and only if there is $\vre>0$ such that any nonzero vector
in any lattice in $X$ has length at least $\vre$. Since $A$ normalizes
$U$, there are distinct $i,j \in \{1, \ldots, n\}$ such that, if we define
$u(s)= \exp(sE_{ij})$, then $U = \{u(s): s \in \R\}$, where $E_{ij}$ is the matrix with 1 in the
$(i,j)$th entry and zero elsewhere. To simplify notation assume $j=1$.
Let $x \in G/\Gamma$, and let $v
= (v_1, \ldots, v_n)$ be a nonzero vector in the lattice corresponding
to $x$. If $v_1=0$ then we can apply the elements  $a(t) \df \diag(e^{(n-1)t},
e^{-t}, \ldots, e^{-t})$, and we will have $a(t)v \to_{t \to \infty} 0$, implying via
Mahler's compactness criterion
that $Ax$ is unbounded. If $v_1 \neq 0$ then we can apply
$u(-v_i/v_1)$ to $x$ to obtain, in the lattice corresponding to
$u(-v_i/v_1)x$, a vector with vanishing $i$-th entry. Now we repeat the
previous step to this vector.
\end{proof}

\begin{proof}[Proof of Theorem \ref{thm: equivalence of conjectures}]
Conjecture \ref{conj: margulis} $\iff$ (a): Suppose
Conjecture \ref{conj: margulis} holds. Any orbit inside any compact minimal
set $Y$ is bounded, hence a compact orbit, which by
minimality coincides with $Y$.
Conversely, assume (a), and suppose $Ax$ is a
bounded orbit. Then $X_0 \df \cl{Ax}$ contains a minimal set and hence
a compact orbit. Now an argument of \cite{CaSD} (see
also \cite{Margulis_oppenheim, LW}) shows that if $X_0$ is not itself
a compact $A$-orbit, then it
contains an $AU$-orbit and hence cannot
be compact, in view of Proposition \ref{prop: CSD unbounded}.

\medskip

(a) $\implies$ (b): This is proved in \cite[Step 6.1]{LW}.

\medskip

(b) $\implies$ (c) $\implies$ (d): Immediate.

\medskip

(d) $\implies$ (c): Suppose $Y_{\alpha}$ is minimal with $AY_{\alpha}
= Y$. Then the sets $\{aY_{\alpha}:a\in A\}$ are all $A_\alpha$-minimal, and they cover
$Y$. Thus if $Y' \subset Y$ is another $A_{\alpha}$-minimal set, then
there is $a \in A$ such that $Y'=aY_\alpha$, and hence $AY' = Y$.

\medskip

(c) $\implies$ (a): If (a) is false then there is an $A$-minimal set
$Y$ which does not contain a compact $A$-orbit. Let $a_0 \in A$ be a
regular element, i.e. $a_0 \in A \sm \bigcup_\alpha A_\alpha$. For
any root $\alpha$, let $\mathbf{u}_\alpha$ be a generator of the root space
$$
\goth{u}_\alpha \df \{\mathbf{v} \in \goth{g}: \forall a \in A, \, \Ad(a)\mathbf{v} = \alpha(a)\mathbf{v}\},
$$
and let $u_\alpha(s) = \exp(s\mathbf{u}_\alpha), \, U_{\alpha} = \{u_\alpha(s): s \in \R\}.$ Let $U^+$ be the
group generated by $\{U_{\alpha}: \alpha(a_0)>0\}$. Arguing as in
\cite[Steps 4.3, 4.4]{LW} (up to replacing $a_0$ by $a_0^{-1}$), we
see that there are distinct $x_1, x_2 \in Y$ and $u^+ \in U^+\sm \{e\}$ such
that $x_2 = u^+x_1$. Arguing as in \cite[Step 4.5]{LW}, there is a root
$\alpha$, distinct $y_1, y_2 \in Y$ and $u=u_\alpha(s_0) \in U_{\alpha},
\, s_0\neq 0$ such that
$y_2 = uy_1$.  In light of Proposition \ref{prop: CSD
  unbounded}, it suffices to show that $Y$ contains a
$U_{\alpha}$-orbit.

Since $AY_{\alpha} = Y$, and each $aY_{\alpha}$ is
$A_{\alpha}$-minimal, we can assume that $y_1 \in Y_\alpha$,
and we claim that $y_2 \in Y_{\alpha}$.
If not then $y_2
\in Y' \df  a'Y_{\alpha}$ for some $a' \in A \sm A_\alpha$, where $Y'$
is also $A_\alpha$-minimal, and there
is a positive distance between $Y'$ and $Y_\alpha$.
According to Lemma \ref{L:cocompact}, which we prove below in a more
general setting, there is $a \in A \sm A_{\alpha}$ such that $a
Y_\alpha = Y_\alpha$.
Replacing $a$ with $a^{-1}$ if necessary, we may assume that $\alpha(a) <1.$
Then for each $k \in \N$,
\[
Y' \ni a^ky_2 = a^kuy_1=\exp(\alpha(a)^ks_0\mathbf{u}_\alpha)a^ky_1 \in\exp(\alpha(a^k)s_0\mathbf{u}_\alpha)Y_{\alpha}.
\]
Since $\alpha(a)^k \to_{k \to \infty} 0$, the distance from $Y'$ to $Y_{\alpha}$ is bounded
above by a quantity tending to zero as $k \to \infty$, a contradiction proving the claim.

Since $y_2 = uy_1 \in Y_{\alpha}$, and the $A_{\alpha}$-action
commutes with that of $u$ we find that $H \df \Stab_G(Y_{\alpha})$
contains $u$. Since $H$ is a closed group, it contains all
conjugates of $u$ by $a$ and hence contains
$U_{\alpha}$. This completes the proof.
\end{proof}

\section{Proof of Theorem \ref{T:main1}}

We will need the following statement, which is proved in \cite{SW} in
a more restricted setting:

\begin{prop}\label{prop: reductive hom spaces}
Let $G$ be a real algebraic group, $\Gamma$ a lattice in $G$, $L$ a connected
reductive algebraic subgroup of $G$, and $y\in G/\Gamma$. Suppose that the orbit
$Ly$ is closed. Then $L$ can be decomposed as a direct product $L=Z\times L'$ of its connected closed normal subgroups such that
\begin{itemize}
\item[(i)] $L'$ contains $L_y\df\{\ell\in L:\ell y=y\}$ as a lattice, and contains the maximal connected normal subgroup of $G$ with compact center.
\item[(ii)] $Z$ is an $\R$-diagonalizable algebraic subgroup of the center of $L$.
\item[(iii)] The natural map $$Z\times(L'/L_y)\to Ly, \qquad (z,\ell L_y)\mapsto z\ell y$$ is a homeomorphism.
\end{itemize}
\end{prop}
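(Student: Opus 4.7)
The plan is to deduce the decomposition from the structure theory of real reductive algebraic groups, combined with classical properties of closed reductive orbits in $G/\Gamma$ as developed in \cite{DM} and \cite{PR}.

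First, I would set up the structural decomposition of $L$: as a connected reductive algebraic group, $L$ is an almost direct product $L = C \cdot S$, where $S = [L,L]$ is the derived semisimple subgroup and $C$ is the connected center, a real algebraic torus. The torus $C$ further decomposes as $C = T_s \cdot T_c$, where $T_s$ is the maximal $\R$-split subtorus and $T_c$ is the maximal anisotropic subtorus (which is compact at the identity component). The natural candidates are $Z = T_s$ and $L' = T_c \cdot S$.

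Next, I would exploit the closedness of $Ly$ to locate $L_y$. The theory of closed reductive orbits (see \cite{PR}) asserts that $Ly$ carries a finite $L$-invariant measure modulo an appropriate $\R$-split central subgroup, with $L_y$ being a lattice in the complementary part. A Mahler-type argument in the spirit of Proposition \ref{prop: CSD unbounded} shows that a nontrivial projection of $L_y$ onto $T_s$ would produce unbounded orbits, contradicting finite measure; hence $L_y \subset L' = T_c \cdot S$. Since $T_s$ is connected and $\R$-split, it contains no nontrivial compact subgroup, so the finite intersection $T_s \cap T_c$ is trivial. This upgrades the almost direct product to a genuine direct product $L = Z \times L'$ of connected closed normal subgroups, verifying (ii). The inclusion in (i) of the maximal connected normal subgroup of $G$ with compact center is automatic, since such a subgroup cannot meet the $\R$-split $Z$ nontrivially and so must lie in $L'$.

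For (iii), observe that $Z$ acts freely on $Ly$ since $Z \cap L_y \subset Z \cap L' = \{e\}$. The natural map $Z \times (L'/L_y) \to Ly$ is continuous, equivariant, and bijective; it is a homeomorphism because $Ly$ is closed in $G/\Gamma$ and the $Z$-action is proper. The lattice property of $L_y$ in $L'$ follows by disintegrating the finite $L$-invariant measure on $Ly$ as Haar measure on $Z$ times a finite $L'$-invariant measure on $L'/L_y$. The main obstacle, I expect, is the transverse-placement step: controlling the projection of $L_y$ onto the $\R$-split central torus so that $L_y \subset L'$ requires careful use of the closedness of $Ly$ and the dynamics of $T_s$ on $G/\Gamma$, drawing on the techniques of \cite{DM,PR}.
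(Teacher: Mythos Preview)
Your identification $Z = T_s$, $L' = T_c \cdot S$ cannot be right, because the decomposition in the statement depends on $y$, not only on $L$. Take $G = \SL_2(\R)$, $\Gamma = \SL_2(\Z)$, $L = A$ the positive diagonal subgroup, and $y$ a point with compact $A$-orbit. Here $L$ is a one-dimensional $\R$-split torus, so your recipe gives $Z = T_s = L$ and $L' = \{e\}$; but $L_y$ is infinite cyclic, hence certainly not contained in $\{e\}$, and the correct decomposition is $Z = \{e\}$, $L' = L$. More generally, whenever the $\R$-split central torus already meets the stabilizer in a lattice, your $Z$ is too large. The ``Mahler-type argument'' you invoke cannot rescue this: $Ly$ is only assumed closed, not of finite volume, so there is no finite measure to contradict, and a nontrivial projection of $L_y$ onto $T_s$ is perfectly compatible with closedness. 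For the same reason, your derivation of (iii) by ``disintegrating the finite $L$-invariant measure on $Ly$'' is circular---that measure is typically infinite, and showing when it is finite is essentially the content of the proposition.

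The paper's argument is necessarily $y$-dependent. It applies Ratner's theorem to the maximal semisimple-without-compact-factors normal subgroup $S\subset L$ to produce a closed finite-volume orbit $S'y$ with $S\subset S'\subset L$; sets $N = K S'$ with $K$ the maximal compact normal subgroup of $L$; passes to the simply connected abelian quotient $q:L\to L/N$, where $q(L_y)$ is discrete; and defines $L'$ as the preimage of the unique connected subgroup of $L/N$ containing $q(L_y)$ as a lattice. Only after $L'$ is pinned down in this way is $Z$ chosen as a complementary $\R$-split central factor. The lattice property of $L_y$ in $L'$ then follows from two standard lemmas in \cite{Rag} on towers of lattices, with no appeal to an invariant measure on $Ly$.
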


\begin{proof}
Without loss of generality, we may assume that $y$ is the projection of $e$ in $G/\Gamma$, and hence $L_y=L\cap\Gamma$.
Let $S$ be the maximal connected normal semisimple subgroup of $L$ without compact factors. By Ratner's theorem, there is a connected closed subgroup $S'$ of $G$ containing $S$ such that $\cl{Sy}=S'y$ and $S'\cap\Gamma$ is a lattice in $S'$. Since $Ly$ is closed, we have $S'\subset L$. Let $K$ be the maximal connected normal compact subgroup of $L$, and let $N=KS'$. Then $N$ is a connected closed normal subgroup of $L$ and $L/N$ is a simply connected abelian Lie group. Since $S'y$ is closed in $Ly$, $S'L_y$, and hence $NL_y$, is closed in $L$. Thus $q(L_y)$ is a discrete subgroup of $L/N$, where $q:L\to L/N$ is the quotient homomorphism. Let $C$ be the unique connected subgroup of $L/N$ which contains $q(L_y)$ as a lattice, and let $L'=q^{-1}(C)$. We prove that $L'$ satisfies (i). It suffices to prove that $L_y$ is a lattice in $L'$. Firstly, since $q(L_y)$ is a lattice in $C$, $L'/NL_y$ carries an $L'$-invariant finite measure. On the other hand, since $S'\cap\Gamma$ is a lattice in $S'$, it follows that $N\cap\Gamma$ is a lattice in $N$, and hence $L_y$ is a lattice in $NL_y$ (see \cite[Lem. 1.7]{Rag}). Thus $L_y$ is a lattice in $L'$ (see \cite[Lem. 1.6]{Rag}).

Since $L'$ satisfies (i), we can choose $Z$ satisfying (ii) such that $L=Z\times L'$. Moreover, the natural map in (iii) is the composition of the natural homeomorphisms $Z\times(L'/L_y)\cong L/L_y$ and $L/L_y\cong Ly$, hence is a homeomorphism. This completes the proof.
\end{proof}

In the rest of this paper, we assume that $G, \Gamma, A$ are as in Theorem \ref{T:main2}, i.e., $G$ is a connected reductive real algebraic group, $\Gamma\subset G$ is a lattice, and $A\subset G$ is a maximal connected $\R$-diagonalizable subgroup. Let $\Phi=\Phi(A,G)$ be the root system. For $\alpha\in\Phi$, let $\goth{u}_\alpha\subset\goth{g}$ be the root space of $\alpha$, and let $U_\alpha=\exp(\goth{u}_\alpha)$ be the root group.
The following statement will be important for the sequel, and is of
independent interest. Its proof is inspired by ideas of Shahar
Mozes \cite{Mozes epimorphic}.

\begin{prop}\label{prop: minimal normalized}
Let $Y \subset G/\Gamma$ be a compact $A$-minimal set, $U$ be a connected unipotent subgroup of $G$ normalized
by $A$. Then there exist an $A$-invariant residual subset $Y_0 \subset Y$ and a connected reductive closed subgroup $\til{H}$ of $G$ which contains $U$ and is normalized by $A$ such that
\begin{itemize}
\item[(i)] For every $y\in Y_0$, we have $\cl{Uy}=\til{H}y.$
\item[(ii)] The algebraic subgroup $L\df N_G(\til{H})^0$ of $G$ is reductive, and $Y$ is contained in a closed $L$-orbit of finite volume.
\end{itemize}
\end{prop}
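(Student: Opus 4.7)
The plan is to apply Ratner's orbit closure theorem to the $U$-action on $G/\Gamma$, and then extract a canonical subgroup $\til{H}$ through a Baire-category/maximality argument in the spirit of Mozes \cite{Mozes epimorphic}, combining it with the $A$-action to force both $A$-invariance and reductivity.

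By Ratner's theorem, for each $y \in Y$ one has $\cl{Uy} = H_y y$, where $H_y$ is a connected closed subgroup of $G$ containing $U$ whose orbit $H_y y$ carries a finite $H_y$-invariant measure, and the collection of subgroups $H$ that arise in this way is countable. The function $y \mapsto \dim \cl{Uy}$ is lower semicontinuous on $Y$ (any Hausdorff limit of $\cl{U y_n}$ contains $\cl{Uy}$), so the set $Y^*$ on which the dimension attains its maximum is a dense $G_\delta$. Partitioning $Y^*$ into the countably many sets $Y_H \df \{y \in Y^* : H_y = H\}$ and invoking Baire category, I obtain a subgroup $\til{H}$ for which $Y_{\til{H}}$ is non-meager. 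Since $A$ normalizes $U$, one computes $H_{ay} = a H_y a^{-1}$; as the family of possible $H_y$'s is countable while $A$ is a connected Lie group, the set $\{a \in A : a \til{H} a^{-1} = \til{H}\}$ has countable index in $A$ and therefore equals $A$. Hence $A$ normalizes $\til{H}$ and $Y_{\til{H}}$ is $A$-invariant; by $A$-minimality its closure has nonempty interior, equal to all of $Y$, and a standard argument using compactness of $Y$ upgrades $Y_{\til{H}}$ to a residual $A$-invariant subset $Y_0$ on which $\cl{Uy} = \til{H}y$.

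The main obstacle I anticipate is proving that $\til{H}$ is reductive. My strategy, following Mozes, is by contradiction from the maximality built into the definition of $\til{H}$. If the unipotent radical of $\til{H}$ were nontrivially larger than what $U$ contributes, then since $A$ normalizes $\til{H}$, the Lie algebra of that radical would decompose into nontrivial $A$-weight spaces transverse to $\Lie(U)$. Combining the polynomial-divergence of the $U$-action along $\til{H} y$ with suitable $A$-translates in the compact set $Y$ would produce, for some $y' \in Y_0$, limit points of $Uy'$ lying outside $\til{H} y'$, contradicting $\cl{U y'} = \til{H} y'$. This forces the unipotent radical of $\til{H}$ to coincide with the smallest $A$-normalized subgroup containing $U$, which together with the $A$-invariance of the Levi decomposition yields that $\til{H}$ is reductive.

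For conclusion (ii), since $\til{H}$ is reductive and $A$-normalized, $L \df N_G(\til{H})^0$ is a connected reductive algebraic subgroup of $G$ containing both $\til{H}$ and $A$ (the normalizer of a reductive subgroup in a reductive algebraic group is reductive, as its identity component is the product of $\til{H}$ with the reductive centralizer $Z_G(\til{H})^0$). For any $y \in Y_0$, the orbit $\til{H} y$ is closed with finite volume; applying Proposition \ref{prop: reductive hom spaces} together with Ratner's theorem to the connected reductive group $L$ yields that $Ly$ is closed with finite $L$-invariant measure. Since $A \subset L$ gives $Ay \subset Ly$, $A$-minimality of $Y$ yields $Y = \cl{Ay} \subset Ly$, completing the proof.
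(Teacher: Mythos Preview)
Your overall architecture is right, but there are real gaps at each of the three key steps.

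\textbf{Countability and $A$-normalization.} Ratner's theorem gives a countable family $\mathcal{H}$ of subgroups such that $\cl{Uy}=(gHg^{-1})y$ for some $H\in\mathcal{H}$ and some $g$ with $\pi(g)=y$; but the groups $H_y=gHg^{-1}$ themselves vary with $g$ and need not form a countable set. So your partition of $Y^*$ into sets $\{y:H_y=H\}$ is not a countable partition, and the connectedness argument ``$\{a\in A:a\til{H}a^{-1}=\til{H}\}$ has countable index in $A$'' breaks down. The paper handles this differently: it fixes $H_0\in\mathcal{H}$ of \emph{minimal} dimension with $Y\subset\pi(X(H_0,U))$, builds a continuous $A$-equivariant map $\varphi:Y_0\to\bar V$ into a projective space via $y\mapsto\bar{\mathbf p}_{gH_0g^{-1}}$, and then uses minimality of $Y$ together with the weight decomposition of $V$ under $A$ to force $\varphi(Y_0)$ to be a single $\bar\rho(A)$-fixed point. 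That is what yields an $A$-normalized $\til H$.

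\textbf{Reductivity.} Your contradiction sketch (``polynomial divergence would push limit points outside $\til H y$'') is not an argument, and the conclusion you draw --- that the unipotent radical of $\til H$ equals the smallest $A$-normalized group containing $U$ --- would make $\til H$ unipotent, not reductive. The paper's proof here is the heart of the matter and uses an outside input you are missing: the Dani--Margulis discreteness theorem \cite[Thm.~3.4]{DM}. From compactness of $Y$ one gets $A\subset M\Gamma g^{-1}$ for some compact $M$, hence $\chi(A)\mathbf p_{\til H}\subset\rho(M)\rho(\Gamma g^{-1})\mathbf p_{\til H}$, and discreteness of $\rho(\Gamma g^{-1})\mathbf p_{\til H}$ forces the modular character $\chi(a)=\det\Ad(a)|_{\Lie\til H}$ to be trivial. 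Then $AW$ (with $W$ the unipotent radical of $\til H$) sits in a minimal parabolic, and a regular $a_0\in A$ expands $\Lie W$, contradicting $\chi\equiv 1$ unless $W$ is trivial.

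\textbf{Part (ii).} Proposition~\ref{prop: reductive hom spaces} \emph{assumes} $Ly$ is closed; it does not prove it. The paper obtains closedness of $Ly$ again from \cite[Thm.~3.4]{DM}, via $Ly=gN_G^1(H_0)^0\pi(e)$. Finite volume is then argued by contradiction: if not, Proposition~\ref{prop: reductive hom spaces} produces a nontrivial $\R$-split central torus $Z\subset A$ with $Zy$ divergent, contradicting $Zy\subset Y$ compact. Both of these steps are absent from your sketch.
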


\begin{proof}
We divide the proof into steps.

\textbf{Step 1.} Let $\pi:G \to G/\Gamma$ be the natural projection, and let
$\mathcal{H}$ be the collection of connected closed subgroups $H$ of $G$ for
which $H\pi(e) \cong H/\Gamma \cap H$ is closed and of finite volume, and such that the
subgroup of $H$ generated by its
unipotent elements acts ergodically on $H\pi(e)$. It was shown by
Ratner (see \cite{Ratner ICM}) that $\mathcal{H}$ is
countable, and for any $g\in G$, there is $H \in \mathcal{H}$
such that $\cl{U\pi(g)} = (gHg^{-1})\pi(g)$. For any $H \in \mathcal{H}$, let
$$X(H,U) \df \{g \in G:  Ug \subset gH\}.$$
Each $X(H,U)$ is an algebraic subvariety of $G$,
and for any $g \in X(H,U)$ we have $$\cl{U\pi(g)} \subset gH\pi(e) = (gHg^{-1})\pi(g).$$
Note that $$N_G(U)X(H,U)N_G(H)=X(H,U).$$ In particular, $X(H,U)$ is left $A$-invariant. Note also that $G \in \mathcal{H}$, and thus $G/\Gamma = \bigcup_{H \in \mathcal{H}}\pi(X(H,U)).$

Let $H_0\in\mathcal{H}$ be a group of minimal dimension
  for which $\pi(X(H_0,U))$ contains an open subset of $Y$. Since $Y$
  is $A$-minimal and $\pi(X(H_0,U))$ is $A$-invariant, we have
$$Y \subset \pi(X(H_0,U)).$$
By the choice of $H_0$, if $H\in\mathcal{H}$ is a proper subgroup of $H_0$, then  $Y \sm \pi(X(H,U))$ is a
residual subset of $Y$, and hence so is
$$
Y_0 \df Y \sm \bigcup_{H\in\mathcal{H},H\subsetneqq H_0} \pi(X(H,U)).
$$
Note that $Y_0$ is $A$-invariant. It is easy to see that for $y\in Y$ and $g\in X(H_0,U)$ with $y=\pi(g)$, $y\in Y_0$ if and only if $\cl{Uy} = (gH_0g^{-1})y$.
In the sequel we prove that $Y_0$ has the desired properties.

\textbf{Step 2.} We prove that for any $y\in Y_0$ and $g\in X(H_0,U)$ with $y=\pi(g)$, there is a neighborhood $\Omega$ of $g$ in $G$ such that $$\pi^{-1}(Y)\cap\Omega\subset X(H_0,U).$$ If not, then there is a sequence $\{g_n\}\subset \pi^{-1}(Y)\sm X(H_0,U)$ with $g_n\to g$. By the Baire category theorem, there is a compact set $K\subset X(H_0,U)$ such that $\pi(K)$ contains a nonempty open subset of $Y$. In view of the minimality of the $A$-action on $Y$ and the left $A$-invariance of $X(H_0,U)$, we may assume that $\pi(K)$ contains a neighborhood of $y$ in $Y$. Since $Y\ni\pi(g_n)\to y$, we may assume that $\pi(g_n)\in\pi(K)$. So there are sequences $\{k_n\}\subset K$ and $\{\gamma_n\}\subset\Gamma$ such that $g_n=k_n\gamma_n$. Since $K$ is compact, we may also assume that $k_n\to k$ for some $k\in K$. Hence $\gamma_n\to\gamma$, where $\gamma=k^{-1}g\in\Gamma$. It follows that $\gamma_n=\gamma$ for all large $n$. Since $\pi(g)=\pi(k)=y\in Y_0$, we have
$$(gH_0g^{-1})y=\cl{Uy}=(kH_0k^{-1})y.$$
This implies that $\gamma\in N_G(H_0)$. Thus for all large $n$ we have $$g_n=k_n\gamma\in X(H_0,U)N_G(H_0)=X(H_0,U),$$ a contradiction.

\textbf{Step 3.} Let $V=\bigoplus_{k=1}^{\dim G}\bigwedge^k\goth{g}$, and $\bar{V}$ be the projective space of $V$. Consider the representation $\rho=\bigoplus_{k=1}^{\dim G}\bigwedge^k\Ad:G\to\GL(V)$ and the corresponding projective representation  $\bar{\rho}: G \to \PGL(\bar{V})$. If $S$ is a Lie subgroup of $G$, we denote $\bar{\mathbf{p}}_S=\bigwedge^{\dim S}\Lie(S)$, which is a line in $\bigwedge^{\dim S}\goth{g}$, hence an element in $\bar{V}$. By Ratner's theorem, for every $x\in G/\Gamma$, there is a connected closed subgroup $S$ of $G$ containing $U$ such that $\cl{Ux}=Sx$. This defines a map $$\varphi:G/\Gamma\to\bar{V}, \qquad \varphi(x)=\bar{\mathbf{p}}_S.$$ Note that if $\cl{Ux}=Sx$, then $\cl{U(ax)}=a\cl{Ux}=(aSa^{-1})(ax)$. This means that $\varphi$ is $A$-equivariant, i.e.,
$$\varphi(ax)=\bar{\rho}(a)\varphi(x), \qquad \forall a\in A, x\in G/\Gamma.$$
We prove below that the map $\varphi$ is continuous on $Y_0$.

Let $y_n,y\in Y_0$ with $y_n\to y$. We need to show that $\varphi(y_n)\to\varphi(y)$. Choose $g\in X(H_0,U)$ with $y=\pi(g)$, and choose $g_n\in G$ with $y_n=\pi(g_n)$ and $g_n\to g$. By Step 2, we may assume that $g_n\in X(H_0,U)$ for every $n$. Since $y_n,y\in Y_0$, we have
$$\cl{Uy_n}=(g_nH_0g_n^{-1})y_n \qquad \text{and} \qquad \cl{Uy}=(gH_0g^{-1})y.$$
This means that $$\varphi(y_n)=\bar{\rho}(g_n)\bar{\mathbf{p}}_{H_0} \qquad \text{and} \qquad \varphi(y)=\bar{\rho}(g)\bar{\mathbf{p}}_{H_0}.$$
Hence $\varphi(y_n)\to\varphi(y)$.

\textbf{Step 4.} We prove that there is a connected closed subgroup $\til{H}$ of $G$ which contains $U$ and is normalized by $A$ such that for every $y\in Y_0$, we have $\cl{Uy}=\til{H}y.$ Note that a connected subgroup $S$ of $G$ is normalized by $A$ if and only if $\bar{\mathbf{p}}_S$ is fixed by $\bar{\rho}(A)$. Thus it suffices to prove that the map $\varphi$ is constant on $Y_0$ and $\varphi(Y_0)\subset\bar{V}^A$, where $\bar{V}^A$ is the set of $\bar{\rho}(A)$-fixed points in $\bar{V}$.

Let $y\in Y_0$. If $\varphi(y)\notin \bar{V}^A$, then there is a neighborhood $N$ of $\varphi(y)$ in $\bar{V}$ such that $\cl{N}\cap\bar{V}^A=\emptyset$. Let
$$A_0 \df \{a\in A:ay\in \varphi^{-1}(N)\}.$$
Since $\varphi$ is $A$-equivariant, we have $\bar{\rho}(A_0)\varphi(y)\subset N$. Hence $$\cl{\bar{\rho}(A_0)\varphi(y)}\cap\bar{V}^A=\emptyset.$$
On the other hand, since the $\rho(A)$-action on $V$ is $\R$-diagonalizable, there is a finite set $\Psi$ of homomorphisms $A\to\R^*$ and a direct sum decomposition
$$V=\bigoplus_{\chi\in\Psi}V_\chi, \quad \text{where} \quad V_\chi=\{v\in V:\rho(a)v=\chi(a)v,\forall a\in A\}.$$
Note that $\bar{V}^A$ is equal to the union of the projective spaces of $V_\chi$.
Choose a nonzero vector $v\in\varphi(y)$, and write $v=\sum_{\chi\in\Psi'}v_\chi$, where $\emptyset\ne\Psi'\subset\Psi$ and $0\ne v_\chi\in V_\chi$ for every $\chi\in\Psi'$. Since $\varphi$ is continuous on $Y_0$, $\varphi^{-1}(N)$ contains a neighborhood of $y$ in $Y_0$.
By the minimality of the $A$-action on $Y$, $A_0$ is syndetic in $A$, i.e., there is a compact subset $C\subset A$ with $A_0C=A$. From this it is easy to see that there exist $\chi_0\in\Psi'$ and a sequence $\{a_n\}\subset A_0$ such that $(\chi_0^{-1}\chi)(a_n)\to 0$ for any $\chi\in\Psi'\sm\{\chi_0\}$. Thus
$$\chi_0(a_n)^{-1}\rho(a_n)v=v_{\chi_0}+\sum_{\chi\in\Psi'\sm\{\chi_0\}}(\chi_0^{-1}\chi)(a_n)v_\chi\to v_{\chi_0}.$$
This means that $\bar{\rho}(a_n)\varphi(y)$ converges to a line in $V_{\chi_0}$, which is a point in $\bar{V}^A$. Thus $\cl{\bar{\rho}(A_0)\varphi(y)}\cap\bar{V}^A\ne\emptyset$. This is a contradiction, hence proves that $\varphi(Y_0)\subset\bar{V}^A$.

Since $\varphi$ is continuous on $Y_0$ and is $A$-equivariant, and every $A$-orbit in $Y_0$ is dense in $Y_0$, it follows that every $\bar{\rho}(A)$-orbit in $\varphi(Y_0)$, which is a single point, is dense in $\varphi(Y_0)$. Thus $\varphi(Y_0)$ consists of only one point. Hence $\varphi$ is constant on $Y_0$.

\textbf{Step 5.} We prove that $\til{H}$ is reductive. For a Lie subgroup $S$ of $G$, denote $$N_G^1(S)\df\{g\in N_G(S):\det\Ad(g)|_{\Lie(S)}=1\}.$$
We first prove that $A\subset N_G^1(\til{H})$. Consider the character $\chi:A\to\R^*$ defined by  $\chi(a)=\det\Ad(a)|_{\Lie(\til{H})}$.  Let $\mathbf{p}_{\til{H}}$ be a nonzero vector in $\bar{\mathbf{p}}_{\til{H}}$. Then $\rho(a)\mathbf{p}_{\til{H}}=\chi(a)\mathbf{p}_{\til{H}}$ for every $a\in A$. Let $g\in X(H_0,U)$ with $\pi(g)\in Y_0$. Then $\til{H}=gH_0g^{-1}$, and hence $\rho(g^{-1})\mathbf{p}_{\til{H}}\in\bar{\mathbf{p}}_{H_0}$.
Since $Y$ is compact, there is a compact subset $M\subset G$ such that $A\pi(g)\subset Y\subset M\pi(e)$. This implies $A\subset M\Gamma g^{-1}$. Thus
$$\chi(A)\mathbf{p}_{\til{H}}=\rho(A)\mathbf{p}_{\til{H}}\subset\rho(M\Gamma g^{-1})\mathbf{p}_{\til{H}}.$$
By \cite[Thm. 3.4]{DM}, $\rho(\Gamma g^{-1})\mathbf{p}_{\til{H}}$ is discrete. So $\rho(M\Gamma g^{-1})\mathbf{p}_{\til{H}}$ is a closed subset of $V$ and does not contain $0$. Hence $0\notin\cl{\chi(A)\mathbf{p}_{\til{H}}}$. This implies that $\chi(A)\ne\R^*$. So $\chi$ is trivial. Hence $A\subset N_G^1(\til{H})$.

Let $W$ be the unipotent radical of the Zariski closure of $\til{H}$ in $G$. Since $A\subset N_G^1(\til{H})$, we have $W\subset\til{H}$ and $A\subset N_G^1(W)$.
Since the real algebraic group $AW$ is solvable and $\R$-split, by Borel's fixed point theorem \cite[Prop. 15.2]{Borel}, $AW$ is contained in a
minimal parabolic subgroup $P$ of $G$. The set
$\Phi^+=\{\alpha\in\Phi:U_\alpha\subset P\}$
is a system of positive roots. So there exists $a_0\in A$ such that
$\alpha(a_0)>1$ for every $\alpha\in\Phi^+$. If $W$ is nontrivial, then all eigenvalues of $\Ad(a_0)|_{\Lie(W)}$ are of the form $\alpha(a_0)$ $(\alpha\in\Phi^+)$, and hence $\det\Ad(a_0)|_{\Lie(W)}>1$. This contradicts $A\subset N_G^1(W)$. So $W$ is trivial. Hence $\til{H}$ is reductive. The proof of (i) is completed.

\textbf{Step 6.}  We now prove (ii). Since $\til{H}$ is reductive, so is $L$ (see e.g. \cite[Lem 1.1]{LR}), and we have $L=N_G^1(\til{H})^0$. Let $y\in Y_0$,  $g\in X(H_0,U)$ with $\pi(g)=y$. Then $$Ly=N_G^1(\til{H})^0y=gN_G^1(H_0)^0\pi(e).$$  By \cite[Thm. 3.4]{DM}, $N_G^1(H_0)\pi(e)$ is closed. So $Ly$ is also closed. Since $A\subset L$, we have $Y = \cl{Ay} \subset Ly$. It remains to prove that $Ly$ is of finite volume. If not, then by Proposition \ref{prop: reductive hom spaces}, there is a nontrivial $\R$-diagonalizable connected algebraic central subgroup $Z$ of $L$ such that $Zy$ is a divergent orbit in $G/\Gamma$. Since $L$ contains $A$ and is reductive, we must have $Z\subset A$. Thus $Zy\subset Y$. This contradicts the compactness of $Y$.
\end{proof}

\begin{proof}[Proof of Theorem \ref{T:main1}]
Firstly, we remark that if $S$ is a Lie subgroup of $G$ containing $A$, then $S$ has no nontrivial connected unipotent subgroups if and only
$S^0\subset Z_G(A)$. In fact, since $S$ contains $A$, we have
$$\Lie(S)=(\Lie(S)\cap\Lie(Z_G(A)))\oplus\bigoplus_{\alpha\in\Phi}(\Lie(S)\cap\mathfrak{u_\alpha}).$$
If $S$ has no nontrivial connected unipotent subgroups, then for every $\alpha\in\Phi$ we have $\Lie(S)\cap\mathfrak{u_\alpha}=0$. Thus $\Lie(S)\subset\Lie(Z_G(A))$, and hence $S^0\subset Z_G(A)$. Conversely, since $Z_G(A)$ consists of semisimple elements, if $S^0\subset Z_G(A)$ then $S$ has no nontrivial connected unipotent subgroups.

We now prove that $\Stab_G(Y)$ has no nontrivial connected unipotent subgroups by induction on $\dim G$. If $\dim G=0$,
there is nothing to prove. Assume $\dim G>0$ and the assertion holds
for groups of smaller dimension. We first prove the following:

\medskip

\textbf{Claim.} If there exists a nontrivial connected normal algebraic
subgroup $N$ of $G$ such that $\pi(\Gamma)$ is a lattice in $G/N$,
where $\pi:G\to G/N$ is the projection, then any connected unipotent subgroup $U$ of $\Stab_G(Y)$ is contained in $N$.

\medskip

Let $G'=G/N$. Then $G'$ is a reductive real algebraic group, and $\pi(A)$ is a
maximal connected $\R$-diagonalizable subgroup of $G'$. Let
$\bar{\pi}:G/\Gamma\to G'/\pi(\Gamma)$ be the induced projection. Then
$\bar{\pi}(Y)$ is a compact $\pi(A)$-minimal subset of
$G'/\pi(\Gamma)$, and $\pi(U)$ is a connected unipotent subgroup of $\Stab_{G'}(\bar{\pi}(Y))$. By the induction hypothesis, $\pi(U)$ is trivial. Hence $U\subset N$. This proves the claim.

\medskip

We conclude the proof by considering three cases.

\textbf{Case 1.} Suppose $G$ is not semisimple. By \cite[Cor. 8.27]{Rag}, the group
$N=C(G)^0$ satisfies the assumption of the claim. So any connected unipotent subgroup $U$ of $\Stab_G(Y)$ is contained in $C(G)$. But $C(G)$ consists of semisimple elements. So $U$ must be trivial.

\textbf{Case 2.} Suppose $G$ is semisimple and $\Gamma$ is
reducible. Then there are nontrivial connected normal subgroups
$G_1,G_2$ of $G$ such that $G=G_1G_2$, $G_1\cap G_2$ is discrete, and
the assumption of the claim is satisfied for $N=G_1,G_2$. Thus any connected unipotent subgroup of $\Stab_G(Y)$ is contained in $(G_1\cap G_2)^0$, which is trivial.

\textbf{Case 3.} Suppose $G$ is semisimple and $\Gamma$ is
irreducible. Suppose, to the contrary, that $\Stab_G(Y)$ has a nontrivial connected unipotent subgroup $U$. Since $\Stab_G(Y)$ contains $A$, we may assume that $U$ is normalized by $A$. By Proposition \ref{prop: minimal normalized}, there
exist $y\in Y$ and a connected reductive subgroup $\til{H}$ of $G$
containing $U$ such that $\cl{Uy}=\til{H}y$, and such
that $Ly$ is closed of finite volume and contains $Y$, where
$L=N_G(\til{H})^0$ is a reductive real algebraic group containing $A\til{H}$. There are
three subcases:
\begin{itemize}
  \item[(i)] $L\ne G$. The fact $U\subset\Stab_L(Y)$ contradicts
    the induction hypothesis.
  \item[(ii)] $L=G$ but $\til{H}\ne G$. Then $\til{H}$ is a proper normal subgroup
    of $G$, and has a closed orbit in $G/\Gamma$ of finite
    volume. This contradicts the irreducibility of $\Gamma$.
  \item[(iii)] $\til{H}=G$. Then $Y\supset\cl{Uy}=\til{H}y=G/\Gamma$. Hence $A$ acts minimally on $G/\Gamma$. But
    by \cite[Thm. 2.8]{PR}, if $C$ is Cartan subgroup of $G$ containing $A$, then there are compact $C$-orbits in
    $G/\Gamma$, a contradiction.
\end{itemize}
This completes the proof of Theorem \ref{T:main1}.
\end{proof}

We conclude this section with an example\footnote{The idea of constructing this example is due to Uri Shapira.}. Let $G$ be the underlying real algebraic group of $\SL_2(\C)$. Consider the order $\OO=\Z[\sqrt{-d}]$ in the imaginary quadratic field $\Q(\sqrt{-d})$, where $d>0$ is a square-free integer. Then $\Gamma=\SL_2(\OO)$ is a lattice in $G$. The subgroup $A\subset G$ consisting of positive real diagonal matrices is a maximal connected $\R$-diagonalizable subgroup, and the subgroup $C\subset G$ consisting of all diagonal matrices is a Cartan subgroup. Given a compact $C$-orbit $Cy$ in $G/\Gamma$, the $A$-action on $Cy$ admits two possibilities: either $A$ acts minimally on $Cy$, or all $A$-orbits in $Cy$ are closed. In what follows, we classify all compact $C$-orbits on which $A$ acts minimally. Note that the stabilizer of such a $C$-orbit always contains $C$. Thus the conclusion of Theorem \ref{T:main1} cannot be replaced by the stronger statement `$\Stab_G(Y)$ has no connected subgroups not contained in $A$'.

We first parameterize compact $C$-orbits in $G/\Gamma$. Let $\gamma\in\Gamma$ be a semisimple element of infinite order. Then there exists $g\in G$ with $g\gamma g^{-1}\in C$. It follows that $C\cap g\Gamma g^{-1}$ is infinite. This implies that $C\pi(g)$ is compact, where $\pi:G\to G/\Gamma$ is the projection. Note that $C\pi(g)$ is determined by $\gamma$ up to left translation by
$\begin{bmatrix}
  0&-1\\1&0
\end{bmatrix}$.
By abuse of language, we say that $C\pi(g)$ is \emph{the} compact orbit defined by $\gamma$. 
Every compact $C$-orbit arises in this way. In fact, if $C\pi(g)$ is compact, then $C\cap g\Gamma g^{-1}$ is infinite. Any element $\gamma\in\Gamma$ such that $g\gamma g^{-1}$ lies in $C$ and is of infinite order defines $C\pi(g)$.

The compact $C$-orbits which are $A$-minimal can be characterized as follows.

\begin{prop}\label{example}
Let $\gamma\in\Gamma$ be a semisimple element of infinite order. The following statements are equivalent:
\begin{itemize}
\item[(i)] The $A$-action on the compact $C$-orbit defined by $\gamma$ is not minimal.
\item[(ii)] There is a positive integer $n$ such that $\gamma^n$ has real eigenvalues.
\item[(iii)] The extension field of $\Q$ generated by an eigenvalue of $\gamma$ is Galois over $\Q$.
\item[(iv)] There exists $k\in\{0,1,2,3,4\}$ such that the Pell equation
$$(4-k)a^2-kdb^2=k(4-k)$$
holds, where $a,b\in\Z$ are such that $\mathrm{tr}(\gamma)=a+b\sqrt{-d}$.
\end{itemize}
\end{prop}

\begin{proof}
(i) $\Longleftrightarrow$ (ii): Let $C\pi(g)$ be the compact $C$-orbit defined by $\gamma$. Then $g\gamma g^{-1}\in C$. Note that the cyclic group $\langle g\gamma g^{-1}\rangle$ is infinite, hence is of finite index in $C\cap g\Gamma g^{-1}$. It follows that
\begin{align*}
&\text{$C\pi(g)$ is not $A$-minimal} \\
\Longleftrightarrow \ & A\pi(g) \text{ is compact }\\
\Longleftrightarrow \ & A\cap g\Gamma g^{-1} \text{ is infinite }\\
\Longleftrightarrow \ & A\cap\langle g\gamma g^{-1}\rangle \text{ is infinite }\\
\Longleftrightarrow \ & \text{there is a positive integer $n$ such that } g\gamma^n g^{-1}\in A \\
\Longleftrightarrow \ & \text{there is a positive integer $n$ such that $\gamma^n$ has real eigenvalues.}
\end{align*}
This proves the equivalence of (i) and (ii).

\medskip

In order to prove the equivalence of (ii), (iii), and (iv), we consider the polynomial
\begin{align*}
F(x)&=(x^2-\mathrm{tr}(\gamma)x+1)(x^2-\overline{\mathrm{tr}(\gamma)}x+1)\\
&=x^4-2ax^3+(a^2+b^2d+2)x^2-2ax+1,
\end{align*}
where $a,b$ are as in (iv). Let $\lambda$ be an eigenvalue of $\gamma$. Then the roots of $F$ are $\{\lambda,\lambda^{-1},\bar{\lambda},\bar{\lambda}^{-1}\}$.
It is easy to see that if $b=0$ then (ii), (iii), and (iv) hold. In what follows, we assume that $b\ne0$. Then $\lambda$ is neither real nor a root of unity.
We claim that in this case $F$ is irreducible over $\Q$. In fact, otherwise it would follow from $\lambda\notin\R$ that $\Q(\lambda)$ is an imaginary quadratic field, and hence $\lambda$, as an algebraic unit in $\Q(\lambda)$, would be a root of unity, a contradiction. The irreducibility of $F$ implies that $[\Q(\lambda):\Q]=4$. Let $K$ be the splitting field of $F$ over $\Q$. We identify $\Gal(K/\Q)$ with a permutation group on the roots of $F$. Then $\Gal(K/\Q)$ is contained in the group $D$ of permutations that send $\{\lambda,\lambda^{-1}\}$ onto $\{\lambda,\lambda^{-1}\}$ or $\{\bar{\lambda},\bar{\lambda}^{-1}\}$.
The group $D$ is a dihedral group of order $8$. So $[K:\Q]=4$ or $8$. Note that $\Q(\lambda)$ is Galois over $\Q$ if and only if $[K:\Q]=4$.
Note also that $\Gal(K/\Q)$ contains the complex conjugation, which we denote by $c$. 

\medskip

(ii) $\Longrightarrow$ (iii): Suppose to the contrary that $\Q(\lambda)$ is not Galois over $\Q$. Then $[K:\Q]=8$, and hence $\Gal(K/\Q)=D$. There is a nontrivial element $\sigma\in\Gal(K/\Q)$ of order $2$ that pointwise fixes $\Q(\lambda)$, which must be the transposition of $\bar{\lambda}$ and $\bar{\lambda}^{-1}$. The subfield $\Q(\lambda)\cap\R$ is pointwise fixed by $\sigma$ and $c$. It is easy to see that $\sigma$ and $c$ generate $D$. So $\Q(\lambda)\cap\R=\Q$. It follows from (ii) that $\lambda^n\in\Q(\lambda)\cap\R=\Q$ for some $n\in\N$. As a rational algebraic unit, $\lambda^n$ must be $\pm1$. So $\lambda$ is a root of unity, a contradiction.

\medskip

(iii) $\Longrightarrow$ (iv): It follows from (iii) that $|\Gal(K/\Q)|=4$. So $\Gal(K/\Q)$ consists of even permutations in $D$ (which form the only order $4$ subgroup of $D$ containing $c$), and hence the discriminant $\Delta(F)$ of $F$ is a square.
It is easy to show that 
$$\Delta(F)=16b^4d^2((a+2)^2+b^2d)((a-2)^2+b^2d)$$ 
(see \cite[Example 13.1.3]{Co}). The case of $k\in\{0,4\}$ in (iv) corresponds to $ab=0$. Suppose $ab\ne0$. Then $$(a+2)^2+b^2d=km^2, \qquad (a-2)^2+b^2d=kn^2$$ for some positive integers $k,m,n$ with $k$ square-free and $m\ne n$. It follows that
\begin{align}
8a&=k(m+n)(m-n),\label{1}\\
4(a^2+b^2d+4)&=k(m+n)^2+k(m-n)^2.\label{2}
\end{align}
By eliminating $a$, we obtain
$$(k(m+n)^2-16)(k(m-n)^2-16)+64b^2d=0.$$
Thus
\begin{equation}\label{3}
k(m-n)^2<16.
\end{equation}
Since $k$ is square-free, it follows from \eqref{1} that $m-n$ is even. In view of \eqref{3}, we have $m-n=\pm2$, and hence $k\in\{1,2,3\}$. Now \eqref{1} becomes $k(m+n)=\pm 4a$. It follows from \eqref{2} that
$$k(a^2+b^2d+4)=4a^2+k^2.$$
This is the Pell equation in (iv).

\medskip

(iv) $\Longrightarrow$ (ii): The case of $k\in\{0,4\}$ is obvious. Suppose $k\in\{1,2,3\}$. It is straightforward to check from (iv) that
$$\lambda=\left(\frac{a}{\sqrt{k}}+\frac{\sqrt{d}b}{\sqrt{4-k}}\right)\frac{\sqrt{k}+\sqrt{k-4}}{2}$$
is an eigenvalue of $\gamma$.
Note that $\frac{\sqrt{k}+\sqrt{k-4}}{2}=e^{\frac{\pi i}{3}},e^{\frac{\pi i}{4}},$ or $e^{\frac{\pi i}{6}}$ when $k=1,2,$ or $3$. Thus (ii) follows.
\end{proof}

\section{Proof of Theorem \ref{T:main2}}

The root system $\Phi$ may be non-reduced. For $\alpha\in\Phi$, denote $[\alpha]=\{c\alpha:c>0\}\cap\Phi$. Then $[\alpha]$ has three possibilities: $\{\alpha\}$, $\{\alpha,2\alpha\}$, and $\{\alpha,\alpha/2\}$. Let $\goth{u}_{[\alpha]}=\sum_{\beta\in[\alpha]}\goth{u}_\beta$, and let $U_{[\alpha]}=\exp(\goth{u}_{[\alpha]})$ be the unipotent group with Lie algebra $\goth{u}_{[\alpha]}$. We first prove:

\begin{lem}\label{L:bi-invariant}
Let $R\subset G$ be a closed subset invariant under the conjugation of $A$ such that $R\cap U_{[\alpha]}\subset\{e\}$ for every $\alpha\in\Phi$. Then
there exists a neighborhood $\Omega$ of $e$ in $G$ such that $R\cap\Omega\subset Z_G(A)$.
\end{lem}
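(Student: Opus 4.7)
The plan is to argue by contradiction. Suppose no such $\Omega$ exists; then there is a sequence $r_n \in R$ with $r_n \to e$ and $r_n \notin Z_G(A)$. Using the weight decomposition
$$\goth{g} = \goth{g}_0 \oplus \bigoplus_{\alpha \in \Phi} \goth{u}_\alpha, \qquad \goth{g}_0 = \Lie(Z_G(A)),$$
I write $r_n = \exp(X_n)$ with $X_n \to 0$ in $\goth{g}$ and split $X_n = X_n^0 + \sum_{\alpha \in \Phi} X_n^\alpha$ accordingly. The adjoint action of $a \in A$ fixes $X_n^0$ and scales $X_n^\alpha$ by $\alpha(a)$. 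Since $\exp(\goth{g}_0) \subset Z_G(A)$, the hypothesis $r_n \notin Z_G(A)$ forces some $X_n^\alpha$ to be nonzero; after passing to a subsequence, $S \df \{\alpha \in \Phi : X_n^\alpha \neq 0\}$ is nonempty and independent of $n$.

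The strategy is to find $a_n \in A$ and a root class $[\alpha_0]$ with $[\alpha_0] \cap S \neq \emptyset$ such that, along a subsequence, $\Ad(a_n) X_n$ converges to a nonzero $Y \in \goth{u}_{[\alpha_0]}$. Since $X_n^0 \to 0$, this would give
$$a_n r_n a_n^{-1} = \exp(\Ad(a_n) X_n) \longrightarrow \exp(Y) \in U_{[\alpha_0]} \setminus \{e\}$$
(using injectivity of $\exp$ on the unipotent group $U_{[\alpha_0]}$). Since $R$ is closed and invariant under $A$-conjugation, the limit $\exp(Y)$ lies in $R \cap U_{[\alpha_0]}$, contradicting the hypothesis.

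To construct $a_n = \exp(H_n)$, set $y_{n, \beta} \df -\log \|X_n^\beta\|$ for $\beta \in S$; these diverge to $+\infty$, and $\|\Ad(a_n) X_n^\beta\| = e^{\beta(H_n) - y_{n, \beta}}$. I seek $H_n \in \goth{a}$ for which $\beta(H_n) - y_{n, \beta}$ is bounded on $[\alpha_0] \cap S$ with equality achieved for some $\beta$, and $\beta(H_n) - y_{n, \beta} \to -\infty$ on $S \setminus [\alpha_0]$. The key combinatorial input is that in a root system no two distinct classes contain positively proportional roots (the only positive scalar multiples of $\alpha$ that are themselves roots, namely $\alpha, 2\alpha, \alpha/2$, all lie in $[\alpha]$). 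This provides enough independence among the characters to decouple the scaling of distinct classes, and, combined with the asymptotic slack coming from $y_{n, \beta} \to +\infty$, a Farkas-type feasibility argument produces the required $H_n$ for an appropriately chosen class $[\alpha_0]$, depending on the relative growth rates of the $y_{n, \beta}$.

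The main obstacle I anticipate is verifying that a suitable extremal class $[\alpha_0]$ can always be identified. Concretely, one normalizes the rate vector $(y_{n, \beta}/t_n)_{\beta \in S}$ for some $t_n \to \infty$ and passes to a convergent subsequence; the extremal class is then selected so that the limiting rate vector together with the character of $[\alpha_0]$ satisfy the Farkas feasibility condition. The root-system axiom about positive proportionality plays the essential role here, guaranteeing that such an $[\alpha_0]$ exists regardless of the rate pattern and thereby closing the argument.
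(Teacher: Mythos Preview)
Your overall strategy coincides with the paper's: argue by contradiction, write $r_n=\exp X_n$ with $X_n\to 0$, decompose $X_n$ by root spaces, and conjugate by suitable $a_n\in A$ so that $\Ad(a_n)X_n$ converges to a nonzero element of some $\goth{u}_{[\alpha_0]}$, contradicting $R\cap U_{[\alpha_0]}\subset\{e\}$.

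The gap is that the heart of the argument---selecting $[\alpha_0]$ and constructing $a_n$---is only gestured at. Invoking a ``Farkas-type feasibility argument'' together with the tautology that distinct root classes contain no positively proportional roots does not by itself yield the conclusion: linear relations among roots (e.g.\ $\gamma=\alpha+\beta$) constrain the achievable vectors $(\beta(H))_{\beta\in S}$ in ways that must actually be analyzed. You yourself flag this as the main obstacle, and it is not resolved in the proposal.

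The paper's execution of this step is explicit and short, and it is worth knowing. Fix inner products on $\goth{g}$ and on $\goth{a}^*$. After passing to a subsequence, choose $\alpha_0\in\Phi$ so that $|\mathbf{u}_{n,\alpha}|^{1/|\alpha|}\le|\mathbf{u}_{n,\alpha_0}|^{1/|\alpha_0|}$ for all $\alpha\in\Phi$ and all $n$. Let $\mathbf{a}\in\goth{a}$ be the metric dual of $\alpha_0$, so that $\alpha(\mathbf{a})=\langle\alpha,\alpha_0\rangle$, and set $a_n=\exp(t_n\mathbf{a})$ with $e^{t_n}=|\mathbf{u}_{n,\alpha_0}|^{-1/|\alpha_0|^2}$. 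Then the $\alpha$-component of $\Ad(a_n)X_n$ has norm
\[
|\mathbf{u}_{n,\alpha_0}|^{-\langle\alpha,\alpha_0\rangle/|\alpha_0|^2}\,|\mathbf{u}_{n,\alpha}|
\;\le\;|\mathbf{u}_{n,\alpha_0}|^{(|\alpha|\,|\alpha_0|-\langle\alpha,\alpha_0\rangle)/|\alpha_0|^2},
\]
using the maximality of $\alpha_0$. By Cauchy--Schwarz the exponent is strictly positive for $\alpha\notin[\alpha_0]$ (so this component tends to $0$), equals $0$ for $\alpha\in[\alpha_0]$ (so this component stays bounded), and for $\alpha=\alpha_0$ the norm is exactly $1$. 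A further subsequence then produces the required nonzero limit in $\goth{u}_{[\alpha_0]}$. This single inequality replaces the unspecified feasibility argument in your sketch.
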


\begin{proof}
Suppose the conclusion of the lemma is not true. Then there exists a sequence $\mathbf{r}_n\in\exp^{-1}(R)\sm\Lie(Z_G(A))$ with $\mathbf{r}_n\to0$. Since $\goth{g}=\Lie(Z_G(A))\oplus\bigoplus_{\alpha\in\Phi}\goth{u}_\alpha$, we can write
$$\mathbf{r}_n=\mathbf{z}_n+\sum_{\alpha\in\Phi}\mathbf{u}_{n,\alpha},$$
where $\mathbf{z}_n\in\Lie(Z_G(A))$, $\mathbf{u}_{n,\alpha}\in\goth{u}_\alpha$. Note that $\mathbf{z}_n$ and $\mathbf{u}_{n,\alpha}$ converge to $0$ as $n\to\infty$.
Identifying a character $A\to\R^*$ with its differential, we can think of a root $\alpha$ as an element in $\goth{a}^*$.
By passing to a subsequence, we may assume that there exists
$\alpha_0\in\Phi$ such that
$$|\mathbf{u}_{n,\alpha}|^{\frac{1}{|\alpha|}}\le
|\mathbf{u}_{n,\alpha_0}|^{\frac{1}{|\alpha_0|}}, \qquad \forall
\alpha\in\Phi, n\in\N,$$
where $|\mathbf{u}_{n,\alpha}|$ (resp. $|\alpha|$) is the norm of $\mathbf{u}_{n,\alpha}$ (resp. $\alpha$) with respect to a fixed inner product on $\goth{g}$ (resp. $\goth{a}^*$). Since $\mathbf{r}_n\notin\Lie(Z_G(A))$, we have $|\mathbf{u}_{n,\alpha_0}|>0$. Let $t_n\in\R$ be such that $e^{t_n}=|\mathbf{u}_{n,\alpha_0}|^{-\frac{1}{|\alpha_0|^{2}}}$, and let $\mathbf{a}\in\goth{a}$ be such that
$\alpha(\mathbf{a})=\langle\alpha,\alpha_0\rangle$ for every $\alpha\in\Phi$. Then
\begin{align*}
\Ad(\exp(t_n\mathbf{a}))\mathbf{r}_n&=\mathbf{z}_n+\sum_{\alpha\in\Phi}e^{t_n\alpha(\mathbf{a})}\mathbf{u}_{n,\alpha}\\
&=\mathbf{z}_n+|\mathbf{u}_{n,\alpha_0}|^{-1}\mathbf{u}_{n,\alpha_0}
+\sum_{\alpha\in\Phi\sm\{\alpha_0\}}|\mathbf{u}_{n,\alpha_0}|^{-\frac{\langle\alpha,\alpha_0\rangle}{|\alpha_0|^2}}\mathbf{u}_{n,\alpha}.
\end{align*}
By passing to a subsequence, we may assume that $$|\mathbf{u}_{n,\alpha_0}|^{-1}\mathbf{u}_{n,\alpha_0}\to\mathbf{u}_1$$ for some $\mathbf{u}_1\in\goth{u}_{\alpha_0}$ with $|\mathbf{u}_1|=1$. For $\alpha\in\Phi\sm\{\alpha_0\}$, we have
$$|\mathbf{u}_{n,\alpha_0}|^{-\frac{\langle\alpha,\alpha_0\rangle}{|\alpha_0|^2}}|\mathbf{u}_{n,\alpha}|
\le|\mathbf{u}_{n,\alpha_0}|^{\frac{|\alpha||\alpha_0|-\langle\alpha,\alpha_0\rangle}{|\alpha_0|^2}}.$$
If $\alpha\in\Phi\sm[\alpha_0]$, then $\langle\alpha,\alpha_0\rangle<|\alpha||\alpha_0|$, and hence
$$|\mathbf{u}_{n,\alpha_0}|^{-\frac{\langle\alpha,\alpha_0\rangle}{|\alpha_0|^2}}\mathbf{u}_{n,\alpha}\to0.$$
If $[\alpha_0]\ne\{\alpha_0\}$ and $\alpha\in[\alpha_0]\sm\{\alpha_0\}$, then $\langle\alpha,\alpha_0\rangle=|\alpha||\alpha_0|$, and we have
$$|\mathbf{u}_{n,\alpha_0}|^{-\frac{\langle\alpha,\alpha_0\rangle}{|\alpha_0|^2}}|\mathbf{u}_{n,\alpha}|\le1.$$
Hence by passing to a further subsequence, we may assume that
$$|\mathbf{u}_{n,\alpha_0}|^{-\frac{\langle\alpha,\alpha_0\rangle}{|\alpha_0|^2}}\mathbf{u}_{n,\alpha}\to\mathbf{u}_2$$ for some $\mathbf{u}_2\in\goth{u}_{\alpha}$. In summary, a subsequence of $\Ad(\exp(t_n\mathbf{a}))\mathbf{r}_n$ converges to a nonzero element $\mathbf{u}\in\goth{u}_{[\alpha_0]}$, where $\mathbf{u}=\mathbf{u}_1$ if $[\alpha_0]=\{\alpha_0\}$, and $\mathbf{u}=\mathbf{u}_1+\mathbf{u}_2$ if $[\alpha_0]\ne\{\alpha_0\}$. Since $\exp^{-1}(R)$ is closed
and $\Ad(A)$-invariant, we have $\mathbf{u}\in\exp^{-1}(R)$. Thus
$e\ne\exp(\mathbf{u})\in R\cap U_{[\alpha_0]}$, a contradiction.
\end{proof}

\begin{lem}\label{L:cocompact}
Let $Y\subset G/\Gamma$ be a compact $A$-minimal set, $\alpha\in\Phi$,
and $Y_\alpha\subset Y$ be an $A_\alpha$-minimal set. Suppose
$AY_\alpha=Y$. Then $\Stab_A(Y_\alpha)$ is cocompact in $A$.
\end{lem}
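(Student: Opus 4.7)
The plan is to reduce the cocompactness assertion to showing that $\Stab_A(Y_\alpha)$ strictly contains $A_\alpha$, and then to derive a contradiction from the assumption $\Stab_A(Y_\alpha)=A_\alpha$ by combining the compactness of $Y$ with the $A$-minimality of the action via a Baire category argument.

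First I would make the following reduction. Since $Y_\alpha$ is $A_\alpha$-invariant we have $A_\alpha\subset\Stab_A(Y_\alpha)$; and $\Stab_A(Y_\alpha)$ is closed in $A$, so its image in $A/A_\alpha\cong\R$ is a closed subgroup of $\R$, hence one of $\{0\}$, $c\Z$ (for some $c>0$), or $\R$. The latter two are cocompact, so it suffices to rule out $\Stab_A(Y_\alpha)=A_\alpha$.

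Suppose therefore that $\Stab_A(Y_\alpha)=A_\alpha$, and fix a one-parameter subgroup $\{a_t\}_{t\in\R}\subset A$ complementary to $A_\alpha$, so that $A=A_\alpha\cdot\{a_t\}$. Since $A$ is abelian, each $a_tY_\alpha$ is $A_\alpha$-minimal, and since distinct $A_\alpha$-minimal subsets of $Y$ are disjoint, the family $\{a_tY_\alpha:t\in\R\}$ is pairwise disjoint; together with $AY_\alpha=Y$ it partitions $Y$. Consequently the map
$$\phi:\R\times Y_\alpha\to Y,\qquad \phi(t,y)=a_ty,$$
is a continuous bijection, and the task reduces to contradicting compactness of $Y$.

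To this end, write $Y=\bigcup_{n\in\N}\phi([-n,n]\times Y_\alpha)$ as a countable union of compact (hence closed) subsets of $Y$. Since $Y$ is compact Hausdorff, the Baire category theorem furnishes some $n_0$ for which $\phi([-n_0,n_0]\times Y_\alpha)$ contains a non-empty open subset $U$ of $Y$. By $A$-minimality of $Y$, the translates $\{aU:a\in A\}$ cover $Y$. Fix $y_0\in Y_\alpha$ and, using compactness of $Y$, extract a convergent subsequence $a_{n_k}y_0\to l\in Y$ with $n_k\to\infty$. Pick $a\in A$ with $l\in aU$, and write $a=b\cdot a_s$ with $b\in A_\alpha$ and $s\in\R$; using $bY_\alpha=Y_\alpha$ and commutativity of $A$ one verifies $aU\subset\phi([s-n_0,s+n_0]\times Y_\alpha)$. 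Since $a_{n_k}y_0\in aU$ eventually and $\phi$ is injective, this forces $(n_k,y_0)\in[s-n_0,s+n_0]\times Y_\alpha$, giving $|n_k-s|\le n_0$ and contradicting $n_k\to\infty$.

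The main obstacle I anticipate is this final step: a continuous bijection from a non-compact space onto a compact one need not be impossible per se, so a direct homeomorphism argument is not available. The trick is to combine the Baire category theorem on $Y$ with the $A$-minimality in order to translate a single interior-having piece around and cover all of $Y$ by bounded "windows" of $\phi$, thereby trapping any escaping sequence.
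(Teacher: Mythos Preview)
Your proof is correct and follows essentially the same route as the paper: reduce to showing $\Stab_A(Y_\alpha)\neq A_\alpha$, apply the Baire category theorem to the covering of $Y$ by the compact sets $\{a_t:|t|\le n\}\,Y_\alpha$, and use $A$-minimality together with compactness of $Y$ to force a bounded window to catch arbitrarily far translates. The only cosmetic difference is packaging: the paper first proves the standalone fact that $BY_\alpha$ is open in $Y$ for any open $B\subset A$, then uses compactness to get $B_{n_0}Y_\alpha=Y$ for a bounded $B_{n_0}$ and directly exhibits an element $a^{-1}b\in\Stab_A(Y_\alpha)\setminus A_\alpha$; you instead assume $\Stab_A(Y_\alpha)=A_\alpha$ from the outset (giving the bijection $\phi$) and reach a contradiction via a convergent sequence $a_{n_k}y_0$.
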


\begin{proof}
We first prove that for any open subset $B$ of $A$, $BY_\alpha$ is
open in $Y$. It suffices to prove that for every $b\in B$, $bY_\alpha$
is contained in the interior $\mathrm{int}(BY_\alpha)$ of
$BY_\alpha$. Let $C$ be a compact neighborhood of $e$ in $A$ such that
$bC^{-1}C\subset B$, and let $\{a_n\}$ be a sequence in $A$ such that
$\bigcup_{n=1}^\infty a_nC=A$. Then $\bigcup_{n=1}^\infty
a_nCY_\alpha=Y$. By the Baire category theorem, some $a_nCY_\alpha$,
and hence $CY_\alpha$, has an interior point. Let $c\in C$ be such
that $\mathrm{int}(CY_\alpha)\cap cY_\alpha\ne\emptyset$. Then
$$\mathrm{int}(BY_\alpha)\cap bY_\alpha\supset\mathrm{int}(bc^{-1}CY_\alpha)\cap bY_\alpha=bc^{-1}(\mathrm{int}(CY_\alpha)\cap cY_\alpha)\ne\emptyset.$$
Since $\mathrm{int}(BY_\alpha)$ is open $A_\alpha$-invariant and $bY_\alpha$ is $A_\alpha$-minimal, we have $bY_\alpha\subset\mathrm{int}(BY_\alpha)$. Thus $BY_\alpha$ is open in $Y$.

Now we prove that $\Stab_A(Y_\alpha)$ is cocompact in $A$. Since $\Stab_A(Y_\alpha)\supset A_\alpha$, it suffices to prove that $\Stab_A(Y_\alpha)\ne A_\alpha$.
Let $B_1\subset B_2\subset\cdots$ be a nested sequence of bounded open subsets of $A$ such that $\bigcup_{n=1}^\infty B_n=A$. Then $\{B_nY_\alpha\}$ is an open cover of $Y$. Since $Y$ is compact, there exists $n_0\in\N$ such that $Y=B_{n_0}Y_\alpha$. Let $a\in A$ be such that $a^{-1}B_{n_0}\cap A_\alpha=\emptyset$. Since $aY_\alpha$ is $A_\alpha$-minimal and $Y=B_{n_0}Y_\alpha$, there exists $b\in B_{n_0}$ with $aY_\alpha=bY_\alpha$. It follows that $a^{-1}b\in\Stab_A(Y_\alpha)$. But $a^{-1}b\notin A_\alpha$. So $\Stab_A(Y_\alpha)\ne A_\alpha$. This proves the lemma.
\end{proof}

We are now prepared to prove Theorem \ref{T:main2}.

\begin{proof}[Proof of Theorem \ref{T:main2}]
The assumption implies that every $A_\alpha$-minimal subset of $Y$ is of the form $aY_\alpha$ ($a\in A$), hence by Lemma \ref{L:cocompact}, has a cocompact stabilizer in $A$. Let $$R=\{g\in G:gY\cap Y\ne\emptyset\}.$$
Then $R$ is closed and invariant under the conjugation of $A$. We first prove that $R\cap U_{[\alpha]}=\{e\}$ for every $\alpha\in\Phi$. Suppose the contradiction. Then there exists $e\ne u\in U_{[\alpha]}$ such that $uY\cap Y\ne\emptyset$. Since $u$ commutes with $A_\alpha$, the compact set $uY\cap Y$ is $A_\alpha$-invariant, hence contains an $A_\alpha$-minimal set $Y'$. Since $\Stab_A(Y')$ is cocompact in $A$, there exists a sequence $\{a_n\}\subset\Stab_A(Y')$ such that $a_nua_n^{-1}\to e$. Note that $u^{-1}Y'\subset Y$ is also $A_\alpha$-minimal. So $u^{-1}Y'=aY'$ for some $a\in A$, i.e., $ua\in\Stab_G(Y')$. It follows that
$$\Stab_G(Y')\ni(ua)a_n(ua)^{-1}a_n^{-1}=u(a_nua_n^{-1})^{-1}\to u.$$
By the closedness of $\Stab_G(Y')$, we have $u\in\Stab_G(Y')$. So $a_nua_n^{-1}\in\Stab_G(Y')$. This implies that the group $U_{[\alpha]}\cap\Stab_G(Y')$ is non-discrete. So $U\df(U_{[\alpha]}\cap\Stab_G(Y'))^0$ is nontrivial. On the other hand, since $U_{[\alpha]}$ and $\Stab_G(Y')$ are normalized by $\Stab_A(Y')$, so is $U$. But $N_G(U)$ is Zariski closed in $G$ and $\Stab_A(Y')$ is Zariski dense in $A$. So $A$ normalizes $U$. Hence
$$UY=UAY'=AUY'=AY'=Y.$$
It follows that $U\subset\Stab_G(Y)$. This conflicts Theorem \ref{T:main1}. Hence $R\cap U_{[\alpha]}=\{e\}$ for every $\alpha\in\Phi$.

Let $Z=Z_G(A)^0$. Then $Z$ is the direct product of $A$ and a connected compact subgroup $M$ of $G$ centralized by $A$. Let $y\in Y$. We claim that $Zy$ is compact and contains $Y$. By Lemma \ref{L:bi-invariant} and the preceding paragraph, there exists an open neighborhood $\Omega$ of $e$ in $G$ such that $R\cap\Omega\subset Z$. It follows that $Y\cap\Omega y\subset(R\cap\Omega)y\subset Zy$. Since $Y$ is $A$-minimal and $Y\cap\Omega y$ is a neighborhood of $y$ in $Y$, we have $Y=A(Y\cap\Omega y)\subset Zy$. This in turn implies that $Zy=ZY=MY$ is compact, proving the claim.

Let $\Lambda=\{g\in Z:gy=y\}$. Then $\Lambda$ is a lattice in $Z$ and the natural map $Z/\Lambda\to Zy$ is a homeomorphism. Let $p:Z=A\times M\to M$ be the projection, and let $T=(\overline{p(\Lambda)})^0$. By \cite[Thm. 8.24]{Rag}, $T$ is solvable, hence is a compact torus. Note that
$AT=(\overline{Ap(\Lambda)})^0=(\overline{A\Lambda})^0$. So $ATy$ is closed and contains $Ay$ as a dense subset. It follows that $Y=ATy$. This completes the proof.
\end{proof}

\end{document}